\DeclareMathOperator{\Conv}{Conv}
\DeclareMathOperator{\Convs}{Conv*}
\DeclareMathOperator{\Cl}{Cl}
\DeclareMathOperator{\BCl}{\mathbf{Cl}}
\DeclareMathOperator{\BConv}{\mathbf{Conv}}
\DeclareMathOperator{\BConvs}{\mathbf{Conv*}}
\DeclareMathOperator{\Max}{Max}
\DeclareMathOperator{\Min}{Min}
\newtheorem{theorem}{Theorem}[section]
\newtheorem{definition}[theorem]{Definition}
\newtheorem{lemma}[theorem]{Lemma}
\newtheorem{proposition}[theorem]{Proposition}
\newtheorem{remark}[theorem]{Remark}
\newtheorem{example}[theorem]{Example}
\newtheorem{corollary}[theorem]{Corollary}
\title{Operators on complemented posets}
\author{Michal~Botur, Ivan~Chajda and Helmut~L\"anger}
\date{}
\begin{document}

\footnotetext{Support of the research of the first and second author by the Czech Science Foundation (GA\v CR), project 24-14386L, entitled ``Representation of algebraic semantics for substructural logics'', support of the second author by IGA, project P\v rF~2025~008, and support of the research of the third author by the Austrian Science Fund (FWF), project 10.55776/PIN5424624, is gratefully acknowledged.}

\maketitle
	
\begin{abstract}
Given a complemented poset $\mathbf P=(P,\le,0,1)$, we can assign to every $x\in P$ the set $x^+$ of all its complements. We study properties of the operator $^+$ on $P$, in particular, we are interested in the case when $x^+$ forms an antichain or when $^+$ is involutive or antitone. We apply $^+$ to the set $\Min U(x,y)$ of all minimal elements of the upper cone $U(x,y)$ of $x,y$ and to the set $\Max L(x,y)$ of all maximal elements of the lower cone $L(x,y)$ of $x,y$. By using $^+$ we define four binary operators on $P$ and investigate their properties that are close to adjointness. We present an example of a uniquely complemented poset that is not Boolean. In the last section we study the orthogonality relation induced by complementation. We characterize when two elements of the Dedekind-MacNeille completion of $\mathbf P$ are orthogonal to each other. Finally, we extend the orthogonality relation from elements to subsets and we prove that two non-empty subsets of $P$ are orthogonal to each other if and only if their convex hulls are orthogonal to each other within the poset of all non-empty convex subsets of $P$.
\end{abstract}

{\bf AMS Subject Classification:} 06A06, 06A11, 06A15, 06C15

{\bf Keywords:} Complemented poset, operator of complementation, adjoint pair, orthogonality, Dedekind-MacNeille completion, poset of convex subsets, convex hull

\section{Introduction}

An element $a$ of a complemented lattice $\mathbf L=(L,\vee,\wedge,0,1)$ may have more than one complement if $\mathbf L$ is not distributive. However, R.~P.~Dilworth \cite D showed that in a distributive complemented (i.e.\ Boolean) lattice the complementation is unique. An element of a modular lattice may have several complements and it may happen that for an appropriate choice of these complements the lattice in question becomes orthomodular and for another choice it need not become orthomodular. One can overcome the problem of choosing an appropriate complement by considering for each $a\in L$ the set $a^+$ of all complements of $a$ without preferring any single of them. Hence, $^+$ is in fact a unary operator on $\mathbf L$, see e.g.\ \cite B. For some reasons concerning so-called unsharp logics, we are interested in complemented posets instead of lattices, see e.g.\ \cite{CL25b}. In posets we usually work with the sets $\Min U(x,y)$ and $\Max L(x,y)$ of all minimal elements of the upper cone $U(x,y)$ of the elements $x$ and $y$ and of all maximal elements of the lower cone $L(x,y)$ of $x$ and $y$, respectively. These sets substitute the missing elements $x\vee y$ and $x\wedge y$ in the case when the supremum or infimum of $x$ and $y$, respectively, do not exist. Of course, in the case when $x\vee y$ exists it coincides with $\Min U(x,y)$ and a similar situation occurs for $x\wedge y$. The operators $\Min U$ and $\Max L$ serve as logical connectives disjunction and conjunction, respectively, in certain kinds of unsharp logics, see again \cite{CL25b}.

Now it is a question whether also for a complemented poset $\mathbf P=(P,\le,0,1)$ it makes sense to define and study the operator $^+$ assigning to every element $a$ of $P$ the set of all its complements, and moreover, whether this operator shares similar properties with the corresponding operator in lattices. It makes no sense to study $^+$ for Boolean posets since in this case every element $a$ has a unique complement $a'$, thus $a^+=\{a'\}$ like in Boolean lattices. However, we show that a finite uniquely complemented poset need not be distributive and hence Boolean. We demonstrate the properties of $^+$ in several examples of complemented posets being neither Boolean nor lattices. Finally, using this operator we define other derived operators having similar properties like $t$-norms and we investigate corresponding adjointness properties.

\section{Basic concepts}

Basic concepts on posets are included e.g.\ in \cite B. Let us recall some well-known concepts and let us define some additional ones.

Let $\mathbf P=(P,\le)$ be a poset, $a,b\in P$, $A,B\subseteq P$ and $\,'$ a unary operation on $P$. Then {\em $\Max A$} and {\em $\Min A$} denote the set of all maximal and minimal elements of $A$, respectively. We define
\begin{align*}
 A\le B & \text{ if }x\le y\text{ for all }x\in A\text{ and all }y\in B, \\
   L(A) & :=\{x\in P\mid x\le A\}, \\
   U(A) & :=\{x\in P\mid A\le x\}, \\
 A\le B & \text{ if }x\le y\text{ for all }x\in A\text{ and }y\in B, \\
A\le_1B & \text{ if for every }x\in A\text{ there exists some }y\in B\text{ with }x\le y, \\
A\le_2B & \text{ if for every }y\in B\text{ there exists some }x\in A\text{ with }x\le y.
\end{align*}
Hence $(U,L)$ forms the Galois correspondence between the posets $(2^P,\subseteq)$ and $(2^P,\subseteq)$ induced by the binary relation $\le$ on $P$.

Here and in the following we identify singletons with their unique element. Instead of $L(\{a\})$, $L(\{a,b\})$, $L(A\cup\{a\})$, $L(A\cup B)$ and $L\big(U(A)\big)$ we simply write $L(a)$, $L(a,b)$, $L(A,a)$, $L(A,B)$ and $LU(A)$. Analogously we proceed in similar cases. 

The subset $A$ of $P$ is called an
\begin{itemize}
\item {\em upset} if $x\in A$, $y\in P$ and $x\le y$ imply $y\in A$,
\item {\em down set} if $x\in A$, $y\in P$ and $y\le x$ imply $y\in A$.
\end{itemize}
The smallest upset containing $a$ is $\uparrow a:=\{x\in P\mid a\le x\}$ and the smallest down set containing $a$ is $\downarrow a:=\{x\in P\mid x\le a\}$. The smallest upset including $A$ is
\[
A^\uparrow:=\bigcup_{x\in A}(\uparrow x)=\{y\in P\mid\text{there exists some }x\in A\text{ with }x\le y\}
\]
and the smallest down set including $A$ is
\[
A^\downarrow:=\bigcup_{x\in A}(\downarrow x)=\{y\in P\mid\text{there exists some }x\in A\text{ with }y\le x\}.
\]
The mappings $A\mapsto A^\uparrow$ and $A\mapsto A^\downarrow$ are closure operators on the poset $(2^P,\subseteq)$, i.e.\ we have
\begin{itemize}
\item $A\subseteq A^\uparrow$; $A\subseteq B\Rightarrow A^\uparrow\subseteq B^\uparrow$; $(A^\uparrow)^\uparrow=A^\uparrow$,
\item $A\subseteq A^\downarrow$; $A\subseteq B\Rightarrow A^\downarrow\subseteq B^\downarrow$; $(A^\downarrow)^\downarrow=A^\downarrow$.
\end{itemize}
Moreover, it is easy to see
\begin{itemize}
\item $A\le_1B\Leftrightarrow A\subseteq B^\downarrow\Leftrightarrow A^\downarrow\subseteq B^\downarrow$,
\item $A\le_2B\Leftrightarrow B\subseteq A^\uparrow\Leftrightarrow B^\uparrow\subseteq A^\uparrow$.
\end{itemize}

The {\em poset} $\mathbf P$ is called {\em distributive} (see e.g.\ \cite P) if it satisfies one of the following equivalent conditions:
\begin{align*}
 L\big(U(x,y),z\big) & \approx LU\big(L(x,z),L(y,z)\big), \\	
UL\big(U(x,y),z\big) & \approx U\big(L(x,z),L(y,z)\big), \\	
 U\big(L(x,y),z\big) & \approx UL\big(U(x,z),U(y,z)\big), \\
LU\big(L(x,y),z\big) & \approx L\big(U(x,z),U(y,z)\big).
\end{align*}
The {\em poset} $\mathbf P$ is called {\em bounded} if it has a least element $0$ and a greatest element $1$. Such a poset will be denoted by $(P,\le,0,1)$ and in the whole paper we assume that $0$ differs from $1$, i.e.\ the {\em bounded poset} is {\em non-trivial}.

A reflexive and transitive relation on a set is called a {\em quasiorder}, in particular, the relations $\le_1$ and $\le_2$ are quasiorders on the power set $2^P$ of $P$.

If the supremum or infimum of $A$ exists in the poset $\mathbf P$, we denote it by $\bigvee A$ and $\bigwedge A$, respectively.

Now let $\mathbf P=(P,\le,0,1)$ be a bounded poset. Recall that $b$ is called a {\em complement} of $a$ if $a\vee b=1$ and $a\wedge b=0$. We write $a\perp b$ in this case. This relation is considered as {\em orthogonality}. The {\em poset} $\mathbf P$ is called {\em complemented} if every element of $P$ has at least one complement. A distributive complemented poset is called a {\em Boolean poset}. A unary operation $\,'$ on $P$ is called an {\em involution} if it satisfies the identity $(x')'\approx x$, it is called {\em antitone} if $x,y\in P$ and $x\le y$ imply $y'\le x'$ and it is called a {\em complementation} if $x\perp x'$ for all $x\in P$. A complementation that is an antitone involution is called an {\em orthocomplementation}. An {\em ortholattice} is a bounded lattice $(P,\vee,\wedge,{}',0,1)$ with an orthcomplementation.

The following result was already proved by the authors in \cite{CL25a}. For the reader's convenience we repeat the proof.

\begin{proposition}\label{prop2}
Let $(P,\le,0,1)$ be a bounded distributive poset and $a,b,c\in P$ with $a\perp b$ and $a\perp c$. Then $b=c$ and hence the complementation in a Boolean poset is a unary operation that is an antitone involution {\rm(}cf.\ {\rm\cite N)}.
\end{proposition}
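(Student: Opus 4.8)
The plan is to translate the classical lattice proof of uniqueness of complements into the language of lower and upper cones, and then to read off the two ``hence'' claims from essentially the same computation. First I would record the cone reformulation of orthogonality: since $1$ is the greatest and $0$ the least element, $a\perp b$ holds precisely when $U(a,b)=\{1\}$ and $L(a,b)=\{0\}$. With $a\perp b$ and $a\perp c$ fixed, the goal then reduces to proving $b\le c$ together with $c\le b$.

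For $b\le c$ I would invoke the distributive law $L\big(U(x,y),z\big)\approx LU\big(L(x,z),L(y,z)\big)$ with $x:=a$, $y:=c$, $z:=b$. The left-hand side is $L\big(U(a,c),b\big)=L(1,b)=L(b)$, using $U(a,c)=\{1\}$. On the right-hand side $L(a,b)=\{0\}$, and since $0$ is the least element we have $U\big(\{0\},L(b,c)\big)=U\big(L(b,c)\big)$, so the right-hand side collapses to $LUL(b,c)$. Here I would use the standard identity $LUL=L$, a consequence of the fact that $(U,L)$ is a Galois correspondence, to rewrite this as $L(b,c)=L(b)\cap L(c)$. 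Comparing the two sides gives $L(b)=L(b)\cap L(c)$, i.e.\ $b\le c$; exchanging the roles of $b$ and $c$ (now with $y:=b$, $z:=c$) yields $c\le b$, whence $b=c$.

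Uniqueness makes complementation a genuine unary operation $\,'$. The involution law is then immediate: orthogonality is symmetric, so $a'\perp a$ exhibits $a$ as the unique complement of $a'$, i.e.\ $(a')'=a$. For antitonicity I expect to re-run the same distributive computation: assuming $a\le b$, the inclusion $L(a)\subseteq L(b)$ forces $L(a,b')\subseteq L(b,b')=\{0\}$, hence $L(a,b')=\{0\}$; applying the distributive law to $L\big(U(a,a'),b'\big)=L(b')$ together with $L(a,b')=\{0\}$ and $LUL=L$ gives $L(b')=L(a',b')=L(a')\cap L(b')$, that is $b'\le a'$. The only real obstacle is bookkeeping: choosing the correct one of the four equivalent distributive laws and the correct substitution so that the ``$\vee\,1$'' and ``$\wedge\,0$'' simplifications land on the intended side, after which everything reduces to the Galois identity $LUL=L$.
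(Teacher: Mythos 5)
Your proof is correct, and its core is essentially the paper's argument: both reduce orthogonality to $U(a,b)=\{1\}$, $L(a,b)=\{0\}$ and apply the distributive law $L\big(U(x,y),z\big)\approx LU\big(L(x,z),L(y,z)\big)$ to $L\big(U(a,c),b\big)=L(b)$. The only organizational difference is that the paper runs a single palindromic chain (after reaching $LU\big(L(b,c),0\big)$ it substitutes $L(a,c)$ back in for $\{0\}$ and applies distributivity in reverse, landing directly on $L(c)$), whereas you collapse each side via the Galois identity $LUL=L$ to get $L(b)=L(b,c)$ and, by symmetry, $L(c)=L(b,c)$ -- the same computation split into two symmetric halves. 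One genuine addition on your side: the paper proves only $b=c$ and delegates the ``hence'' clause to the cited paper of Niederle, while you actually verify it -- the involution $(a')'=a$ from symmetry of $\perp$ plus uniqueness, and antitonicity from $a\le b\Rightarrow\{0\}\subseteq L(a,b')\subseteq L(b,b')=\{0\}$ followed by the same distributive computation giving $L(b')=L(a',b')\subseteq L(a')$. Both supplementary arguments are correct as written.
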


\begin{proof}
We have
\begin{align*}
L(b) & =L(1,b)=L\big(U(c,a),b\big)=LU\big(L(c,b),L(a,b)\big)=LU\big(L(c,b),0\big)=LU\big(L(b,c),0\big)= \\
     & =LU\big(L(b,c),L(a,c)\big)=L\big(U(b,a),c\big)=L(1,c)=L(c).	
\end{align*}	
\end{proof}

It was shown already by R.~P.~Dilworth \cite D that a finite uniquely complemented lattice is Boolean. By the previous Proposition~\ref{prop2}, every complemented distributive poset has a unique complementation and hence is Boolean. However, contrary to the case of lattices, there exist finite uniquely complemented posets which are not Boolean, see the following example.

\begin{example}\label{ex1}
The poset depicted in Fig.~1

\vspace*{-4mm}

\begin{center}
\setlength{\unitlength}{7mm}
\begin{picture}(14,10)
\put(7,1){\circle*{.3}}
\put(1,3){\circle*{.3}}
\put(4,3){\circle*{.3}}
\put(7,3){\circle*{.3}}
\put(10,3){\circle*{.3}}
\put(13,3){\circle*{.3}}
\put(2,5){\circle*{.3}}
\put(12,5){\circle*{.3}}
\put(1,7){\circle*{.3}}
\put(4,7){\circle*{.3}}
\put(7,7){\circle*{.3}}
\put(10,7){\circle*{.3}}
\put(13,7){\circle*{.3}}
\put(7,9){\circle*{.3}}
\put(7,1){\line(-3,1)6}
\put(7,1){\line(-3,2)3}
\put(7,1){\line(0,1)2}
\put(7,1){\line(3,2)3}
\put(7,1){\line(3,1)6}
\put(1,3){\line(1,2)1}
\put(1,3){\line(3,2)6}
\put(1,3){\line(9,4)9}
\put(4,3){\line(-1,1)2}
\put(4,3){\line(3,4)3}
\put(4,3){\line(9,4)9}
\put(7,3){\line(-3,2)6}
\put(7,3){\line(-3,4)3}
\put(7,3){\line(3,4)3}
\put(7,3){\line(3,2)6}
\put(10,3){\line(-9,4)9}
\put(10,3){\line(3,4)3}
\put(10,3){\line(1,1)2}
\put(13,3){\line(-9,4)9}
\put(13,3){\line(-1,2)1}
\put(13,3){\line(0,1)4}
\put(2,5){\line(-1,2)1}
\put(2,5){\line(1,1)2}
\put(12,5){\line(-5,2)5}
\put(12,5){\line(-1,1)2}
\put(7,9){\line(-3,-1)6}
\put(7,9){\line(-3,-2)3}
\put(7,9){\line(0,-1)2}
\put(7,9){\line(3,-2)3}
\put(7,9){\line(3,-1)6}
\put(6.85,.3){$0$}
\put(.35,2.85){$a$}
\put(3.35,2.85){$b$}
\put(7.4,2.85){$c$}
\put(10.4,2.85){$d$}
\put(13.4,2.85){$e$}
\put(1.35,4.85){$f$}
\put(12.4,4.85){$f'$}
\put(.35,6.85){$e'$}
\put(3.35,6.85){$d'$}
\put(7.4,6.85){$c'$}
\put(10.4,6.85){$b'$}
\put(13.4,6.85){$a'$}
\put(6.85,9.4){$1$}
\put(1.7,-.75){{\rm Figure~1. Uniquely complemented poset}}
\end{picture}
\end{center}

\vspace*{4mm}

is evidently uniquely complemented but not Boolean since it is not distributive, e.g.
\[
L\big(U(c,f'),a\big)=L(b',1,a)=\{0,a\}\ne\{0\}=LU(0)=LU\big(L(c,a),L(f',a)\big).
\]
\end{example}

It was proved by J.~Niederle \cite N that every pseudocomplemented and uniquely complemented poset is distributive and hence Boolean. Example~\ref{ex1} is not in conflict with this result because the poset depicted in Fig.~1 is not pseudocomplemented. Namely, we have $L(a,a')=L(a,f')=0$ but there does not exist some $x\in P$ with $a',f'\le x$ and $L(a,x)=0$.

\section{The operator $^+$}

Let $\mathbf P=(P,\le,0,1)$ be a bounded poset. We define the following unary operator on subsets of $P$:
\[
A^+:=\{x\in P\mid x\perp y\text{ for all }y\in A\}
\]
for $A\subseteq P$. Hence, if $(P,\le,{}',0,1)$ is a Boolean poset then $x^+=x'$ for all $x\in P$. Obviously, $(^+,^+)$ is the Galois correspondence between the posets $(2^A,\subseteq)$ and $(2^A,\subseteq)$ induced by the symmetric relation $\perp$. From this fact we obtain immediately

\begin{lemma}
Let $\mathbf P=(P,\le,0,1)$ be a complemented poset and $A,B\subseteq P$. Then
\begin{enumerate}[{\rm(i)}]
\item $A\subseteq (A^+)^+$,
\item $A\subseteq B$ implies $B^+\subseteq A^+$,
\item $\big((A^+)^+\big)^+=A^+$,
\item $A\subseteq B^+$ is equivalent to $B\subseteq A^+$.
\end{enumerate}
\end{lemma}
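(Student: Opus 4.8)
The plan is to observe that all four statements are exactly the formal properties of a Galois correspondence, so the only substantive input is the symmetry of the orthogonality relation $\perp$; once item (iv) is secured, the remaining items fall out by purely order-theoretic manipulation. Indeed, the paragraph preceding the lemma already records that $(^+,{}^+)$ is the Galois correspondence induced by $\perp$, so morally there is nothing to do; I would nonetheless spell out the short direct arguments.

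First I would prove (iv), the adjunction law, straight from the definition. Unwinding the membership conditions, $A\subseteq B^+$ says precisely that $x\perp y$ holds for every $x\in A$ and every $y\in B$, while $B\subseteq A^+$ says that $y\perp x$ holds for every $y\in B$ and every $x\in A$. Since $b$ is a complement of $a$ exactly when $a\vee b=1$ and $a\wedge b=0$, and both of these conditions are symmetric in $a$ and $b$, the relation $\perp$ is symmetric; hence the two conditions coincide and (iv) follows.

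Next I would derive (i) from (iv) by a one-line specialisation: putting $B:=A^+$ in (iv) converts the trivially valid inclusion $A^+\subseteq A^+$ into $A\subseteq(A^+)^+$. For (ii) I would argue directly: if $A\subseteq B$ and $x\in B^+$, then $x\perp y$ for every $y\in B$, in particular for every $y\in A$, so $x\in A^+$, whence $B^+\subseteq A^+$. (Alternatively, (ii) also follows formally from (i) and (iv): from $A\subseteq B\subseteq(B^+)^+$ and (iv) applied to the pair $(A,B^+)$ one reads off $B^+\subseteq A^+$.)

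Finally, (iii) combines the first two. Applying (i) with $A^+$ in place of $A$ gives $A^+\subseteq\big((A^+)^+\big)^+$, while applying the antitone law (ii) to the inclusion $A\subseteq(A^+)^+$ from (i) gives $\big((A^+)^+\big)^+\subseteq A^+$; together these yield the claimed equality. I do not expect any genuine obstacle: the entire content is the symmetry of $\perp$ established in the first step. The only point that demands a moment's care is keeping the universal quantifier ``for all $y\in A$'' correctly attached when passing between $A^+$ and $(A^+)^+$ in the arguments for (i) and (iii).
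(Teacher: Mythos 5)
Your proof is correct and follows essentially the same route as the paper, which simply invokes the fact that $(^+,{}^+)$ is the Galois correspondence induced by the symmetric relation $\perp$ and reads off (i)--(iv) as the standard consequences. Your write-up just makes explicit the derivations (symmetry of $\perp$ giving (iv), then (i)--(iii) by formal manipulation) that the paper leaves implicit in the word ``immediately''.
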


Since $\mathbf P$ is non-trivial, we have $\emptyset^+=P$ and $P^+=\emptyset$.

A {\em subset} $A$ of $P$ will be called {\em closed} if $(A^+)^+=A$. Let $\Cl(\mathbf P)$ denote the set of all closed subsets of $P$. Then clearly $\Cl(\mathbf P)=\{A^+\mid A\subseteq P\}$. Because of $A^+\cap A^{++}=\emptyset$ for all $A\subseteq P$ we have that $\BCl(\mathbf P):=\big(\Cl(\mathbf P),\subseteq,{}^+,\emptyset,P\big)$ forms a complete ortholattice with orthocomplementation $^+$ and
\begin{align*}
  \bigvee_{i\in I}A_i & =\left(\left(\bigcup_{i\in I}A_i\right)^+\right)^+, \\
\bigwedge_{i\in I}A_i & =\bigcap_{i\in I}A_i
\end{align*}
for all families $(A_i;i\in I)$ of closed subsets of $P$.

Observe that $A^+=\bigcap\limits_{x\in A}x^+$ for all $A\subseteq P$ and that in case that $\mathbf P$ is uniquely complemented, in particular if $\mathbf P$ is Boolean, we have $\Cl(\mathbf P)=\{\emptyset,P\}\cup\{x^+\mid x\in P\}$.

In the following let $\mathbf N_5=(N_5,\vee,\wedge)$ denote the lattice visualized in Fig.~2.
	
\vspace*{-4mm}
	
\begin{center}
\setlength{\unitlength}{7mm}
\begin{picture}(4,8)
\put(2,1){\circle*{.3}}
\put(1,4){\circle*{.3}}
\put(3,3){\circle*{.3}}
\put(3,5){\circle*{.3}}
\put(2,7){\circle*{.3}}
\put(2,1){\line(-1,3)1}
\put(2,1){\line(1,2)1}
\put(3,3){\line(0,1)2}
\put(2,7){\line(-1,-3)1}
\put(2,7){\line(1,-2)1}
\put(1.85,.3){$0$}
\put(3.4,2.85){$a$}
\put(3.4,4.85){$c$}
\put(.35,3.85){$b$}
\put(1.85,7.4){$1$}
\put(-2.2,-.75){{\rm Figure~2. Non-modular lattice $\mathbf N_5$}}
\end{picture}
\end{center}
	
\vspace*{10mm}

Of course, this lattice is complemented, e.g.\ we have $b^+=\{a,c\}$. The lattice $\BCl(\mathbf N_5)$ is depicted in Fig.~3.

\vspace*{-4mm}
	
\begin{center}
\setlength{\unitlength}{7mm}
\begin{picture}(8,6)
\put(4,1){\circle*{.3}}
\put(1,3){\circle*{.3}}
\put(3,3){\circle*{.3}}
\put(5,3){\circle*{.3}}
\put(7,3){\circle*{.3}}
\put(4,5){\circle*{.3}}
\put(4,1){\line(-3,2)3}
\put(4,1){\line(-1,2)1}
\put(4,1){\line(1,2)1}
\put(4,1){\line(3,2)3}
\put(4,5){\line(-3,-2)3}
\put(4,5){\line(-1,-2)1}
\put(4,5){\line(1,-2)1}
\put(4,5){\line(3,-2)3}
\put(3.85,.3){$\emptyset$}
\put(.35,2.85){$0$}
\put(2.35,2.85){$b$}
\put(5.4,2.85){$ac$}
\put(7.4,2.85){$1$}
\put(3.7,5.4){$N_5$}
\put(.2,-.75){{\rm Figure~3. The lattice $\BCl(\mathbf N_5)$}}
\end{picture}
\end{center}
	
\vspace*{4mm}
	
Here we shortly write $ac$ for $\{a,c\}$. Similarly we proceed in other figures.
	
Some basic properties of the operator $^+$ are as follows:

\begin{proposition}\label{prop1}
Let $\mathbf P=(P,\le,0,1)$ be a complemented poset and $a\in P$. Then the following hold:
\begin{enumerate}[{\rm(i)}]
\item $a\in(a^+)^+$ and $\big((a^+)^+\big)^+=a^+$,
\item $(x^+,\le)$ is an antichain for every $x\in P$ if and only if $\mathbf P$ does not contain a sublattice isomorphic to $\mathbf N_5$ containing $0$ and $1$,
\item $(a^+,\le)$ is convex,
\item if the mapping $x\mapsto(x^+)^+$ from $L$ to $2^L$ is not injective then $\mathbf L$ does not satisfy the identity $(x^+)^+\approx x$.
\end{enumerate}
\end{proposition}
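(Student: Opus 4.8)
The four items are of quite different depth, and I would treat (i), (iii), (iv) in a sentence or two each, reserving the work for (ii).

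Item (i) is just the preceding Lemma read on a singleton: with $A=\{a\}$ and the singleton convention, its parts (i) and (iii) become $a\in(a^+)^+$ and $\big((a^+)^+\big)^+=a^+$. Item (iii) I would prove straight from the cones. Writing out $z\in a^+$ as $U(a,z)=\{1\}$ and $L(a,z)=\{0\}$, suppose $u,w\in a^+$ and $u\le z\le w$; then $z\le w$ gives $L(a,z)\subseteq L(a,w)=\{0\}$ and $u\le z$ gives $U(a,z)\subseteq U(a,u)=\{1\}$, so (as $0\in L(a,z)$ and $1\in U(a,z)$) both cones pin down to the required singletons and $z\in a^+$. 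Item (iv) is the contrapositive of a triviality: if $(x^+)^+\approx x$ held then $(x^+)^+=\{x\}$ for all $x$, and $x\mapsto\{x\}$ is injective, so $x\mapsto(x^+)^+$ would be injective; non-injectivity therefore excludes the identity.

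The substance is (ii), and the plan is to match a failure of the antichain condition with an $\mathbf N_5$ sitting inside $\mathbf P$ between $0$ and $1$, the guiding picture being $\mathbf N_5$ itself with its comparable complements $a<c$ of $b$. For the direction ``some $x^+$ is not an antichain $\Rightarrow\mathbf N_5$'', I would pick $u,v\in x^+$ with $u<v$ and propose $S:=\{0,u,v,x,1\}$. First I would clear away degeneracies: $0^+=\{1\}$ and $1^+=\{0\}$ are singletons, so $x\notin\{0,1\}$; then $x\perp u$ with $x\ne1$ forces $u\ne0$ and dually $v\ne1$, giving the chain $0<u<v<1$, while $x\perp x$ would force $x=0$, so $x$ is none of these. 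A short cone computation shows $x$ is incomparable to both $u$ and $v$ ($x\le u$ or $u\le x$ would collapse $L(x,u)$ or $U(x,u)$ and violate $x\perp u$ with $x\ne0,1$). Hence $(S,\le)\cong\mathbf N_5$ as a poset, and I would finish by verifying that the five joins and meets of $S$ are actually computed in $\mathbf P$: $x\vee u=x\vee v=1$ and $x\wedge u=x\wedge v=0$ are exactly $x\perp u,\ x\perp v$, while $u\vee v=v$, $u\wedge v=u$ follow from $u<v$. For the converse I would take $S\cong\mathbf N_5$ with bottom $0$ and top $1$, read off the element $b$ and its two comparable complements $a<c$ inside $S$, and use that $S$ is a sublattice to conclude $U(a,b)=U(c,b)=\{1\}$ and $L(a,b)=L(c,b)=\{0\}$ in $\mathbf P$; thus $a,c\in b^+$ with $a<c$, so $b^+$ is not an antichain.

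The delicate point, and the one I would be most careful about, is the transfer of join/meet data between $S$ and $\mathbf P$. Orthogonality is defined by the global cones $U,L$ of $\mathbf P$, whereas ``sublattice $\cong\mathbf N_5$'' refers to the induced order; the equivalence only holds if ``sublattice'' is taken in the strong sense that the relevant suprema and infima in $S$ coincide with those in $\mathbf P$. Otherwise, for instance, $a\vee b$ could drop below $1$ in $\mathbf P$ and $a$ would cease to be a complement of $b$. Pinning down this identification in both directions---and checking it for every pair of elements of $S$, not only the visually interesting ones---is where the real care is needed; the rest is bookkeeping with cones.
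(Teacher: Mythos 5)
Your proposal is correct and follows essentially the same route as the paper: (i) via the Galois-correspondence lemma on singletons, (iii) via the same cone inclusions, (iv) by the (contrapositive of the) paper's injectivity argument, and (ii) by the same five-element construction $\{0,u,v,x,1\}$ in one direction and reading off the two comparable complements from the $\mathbf N_5$ sublattice in the other. Your explicit insistence that ``sublattice'' means joins and meets are computed in $\mathbf P$ itself is exactly the reading the paper uses implicitly, so nothing is missing.
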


\begin{proof}
\
\begin{enumerate}[(i)]
\item follows directly from above.
\item First assume there exists some $b\in P$ such that $(b^+,\le)$ is not an antichain. Then $b\notin\{0,1\}$. Now there exist $c,d\in b^+$ with $c<d$. Since $b\notin\{0,1\}$ and $b\in c^+\cap d^+$ we have $c,d\notin\{0,1\}$. Because of $|P|>1$ we have $b\notin\{c,d\}$. Hence the elements $0$, $b$, $c$, $d$ and $1$ are pairwise distinct and form an $\mathbf N_5$ containing $0$ and $1$. If, conversely, $\mathbf P$ contains a sublattice $(P_1,\vee,\wedge)$ isomorphic to $\mathbf N_5$ and containing $0$ and $1$, say $P_1=\{0,e,f,g,1\}$ with $e<f$ then $e,f\in g^+$ and hence $(g^+,\le)$ is not an antichain.
\item If $b,c\in a^+$, $d\in P$ and $b\le d\le c$ then $\{1\}\subseteq U(a,d)\subseteq U(a,b)=\{1\}$ and $\{0\}\subseteq L(a,d)\subseteq L(a,c)=\{0\}$ showing $d\in a^+$.
\item If the mapping $x\mapsto (x^+)^+$ is not injective then there exist $a,b\in L$ with $a\ne b$ and $(a^+)^+=(b^+)^+$ which implies $b\in(b^+)^+=(a^+)^+$ and $b\ne a$ and hence $(a^+)^+\ne a$ showing that $\mathbf P$ does not satisfy the identity $(x^+)^+\approx x$.
\end{enumerate}
\end{proof}

Next we present an example of a complemented poset $(P,\le,0,1)$ where $(x^+,\le)$ is an antichain for each $x\in P$.

\begin{example}
The complemented poset $\mathbf P=(P,\le,0,1)$ visualized in Fig.~4

\vspace*{-4mm}

\begin{center}
\setlength{\unitlength}{7mm}
\begin{picture}(8,8)
\put(4,1){\circle*{.3}}
\put(1,3){\circle*{.3}}
\put(3,3){\circle*{.3}}
\put(5,3){\circle*{.3}}
\put(7,3){\circle*{.3}}
\put(1,5){\circle*{.3}}
\put(3,5){\circle*{.3}}
\put(5,5){\circle*{.3}}
\put(7,5){\circle*{.3}}
\put(4,7){\circle*{.3}}
\put(4,1){\line(-3,2)3}
\put(4,1){\line(-1,2)1}
\put(4,1){\line(1,2)1}
\put(4,1){\line(3,2)3}
\put(4,7){\line(-3,-2)3}
\put(4,7){\line(-1,-2)1}
\put(4,7){\line(1,-2)1}
\put(4,7){\line(3,-2)3}
\put(1,3){\line(0,1)2}
\put(1,3){\line(1,1)2}
\put(1,3){\line(2,1)4}
\put(3,3){\line(-1,1)2}
\put(3,3){\line(2,1)4}
\put(5,3){\line(-2,1)4}
\put(5,3){\line(1,1)2}
\put(7,3){\line(-2,1)4}
\put(7,3){\line(-1,1)2}
\put(7,3){\line(0,1)2}
\put(3.85,.3){$0$}
\put(.35,2.85){$a$}
\put(2.35,2.85){$b$}
\put(5.4,2.85){$c$}
\put(7.4,2.85){$d$}
\put(.35,4.85){$e$}
\put(2.35,4.85){$f$}
\put(5.4,4.85){$g$}
\put(7.4,4.85){$h$}
\put(3.85,7.4){$1$}
\put(-2.2,-.75){{\rm Figure~4. Non-distributive complemented poset $\mathbf P$}}
\end{picture}
\end{center}

\vspace*{4mm}

is not distributive since
\[
L\big(U(a,b),c\big)=L(e,1,c)=L(c)\ne0=LU(0,0)=LU\big(L(a,c),L(b,c)\big)
\]
and does not contain a sublattice isomorphic to $\mathbf N_5$ containing $0$ and $1$. Accordingly to Proposition~\ref{prop1} {\rm(ii)}, $(b^+,\le)=(c^+,\le)=(\{f,g\},\le)$ is an antichain. The lattice $\BCl(\mathbf P)$ is depicted in Fig.~5.

\vspace*{-4mm}

\begin{center}
\setlength{\unitlength}{7mm}
\begin{picture}(16,6)
\put(8,1){\circle*{.3}}
\put(1,3){\circle*{.3}}
\put(3,3){\circle*{.3}}
\put(5,3){\circle*{.3}}
\put(7,3){\circle*{.3}}
\put(9,3){\circle*{.3}}
\put(11,3){\circle*{.3}}
\put(13,3){\circle*{.3}}
\put(15,3){\circle*{.3}}
\put(8,5){\circle*{.3}}
\put(8,1){\line(-7,2)7}
\put(8,1){\line(-5,2)5}
\put(8,1){\line(-3,2)3}
\put(8,1){\line(-1,2)1}
\put(8,1){\line(7,2)7}
\put(8,1){\line(5,2)5}
\put(8,1){\line(3,2)3}
\put(8,1){\line(1,2)1}
\put(8,5){\line(-3,-2)3}
\put(8,5){\line(-1,-2)1}
\put(8,5){\line(1,-2)1}
\put(8,5){\line(3,-2)3}
\put(8,5){\line(-5,-2)5}
\put(8,5){\line(-7,-2)7}
\put(8,5){\line(5,-2)5}
\put(8,5){\line(7,-2)7}
\put(7.85,.3){$\emptyset$}
\put(.35,2.85){$0$}
\put(2.35,2.85){$a$}
\put(4.35,2.85){$d$}
\put(6.1,2.85){$bc$}
\put(9.4,2.85){$fg$}
\put(11.4,2.85){$e$}
\put(13.4,2.85){$h$}
\put(15.4,2.85){$1$}
\put(7.7,5.4){$P$}
\put(4.4,-.75){{\rm Figure~5. The lattice $\BCl(\mathbf P)$}}
\end{picture}
\end{center}

\vspace*{4mm}

\end{example}

An example of a complemented poset which is not a lattice where $(x^+,\le)$ need not be an antichain is as follows.

\begin{example}
The complemented poset $\mathbf P=(P,\le,0,1)$ visualized in Fig.~6

\vspace*{-4mm}

\begin{center}
\setlength{\unitlength}{7mm}
\begin{picture}(10,8)
\put(5,1){\circle*{.3}}
\put(1,3){\circle*{.3}}
\put(3,3){\circle*{.3}}
\put(5,3){\circle*{.3}}
\put(7,3){\circle*{.3}}
\put(1,5){\circle*{.3}}
\put(3,5){\circle*{.3}}
\put(5,5){\circle*{.3}}
\put(7,5){\circle*{.3}}
\put(5,7){\circle*{.3}}
\put(9,3){\circle*{.3}}
\put(9,5){\circle*{.3}}
\put(5,1){\line(-2,1)4}
\put(5,1){\line(-1,1)2}
\put(5,1){\line(0,1)2}
\put(5,1){\line(1,1)2}
\put(5,1){\line(2,1)4}
\put(5,7){\line(-2,-1)4}
\put(5,7){\line(-1,-1)2}
\put(5,7){\line(0,-1)2}
\put(5,7){\line(1,-1)2}
\put(5,7){\line(2,-1)4}
\put(9,3){\line(0,1)2}
\put(1,3){\line(0,1)2}
\put(1,3){\line(1,1)2}
\put(1,3){\line(2,1)4}
\put(3,3){\line(-1,1)2}
\put(3,3){\line(2,1)4}
\put(5,3){\line(-2,1)4}
\put(5,3){\line(1,1)2}
\put(7,3){\line(-2,1)4}
\put(7,3){\line(-1,1)2}
\put(7,3){\line(0,1)2}
\put(4.85,.3){$0$}
\put(.35,2.85){$a$}
\put(2.35,2.85){$b$}
\put(5.4,2.85){$c$}
\put(7.4,2.85){$d$}
\put(9.4,2.85){$i$}
\put(.35,4.85){$e$}
\put(2.35,4.85){$f$}
\put(5.4,4.85){$g$}
\put(7.4,4.85){$h$}
\put(9.4,4.85){$j$}
\put(4.85,7.4){$1$}
\put(-1,-.75){{\rm Figure~6. Non-distributive complemented poset}}
\end{picture}
\end{center}

\vspace*{4mm}

is the horizontal sum of the poset from Fig.~4 and the four-element chain $0<i<j<1$ and hence it is not distributive. Moreover, $\mathbf P$ contains the sublattice $\{0,a,e,i\}$ isomorphic to $\mathbf N_5$ and containing $0$ and $1$, and hence $(b^+,\le)=(\{f,g,i,j\},\le)$ is not an antichain in accordance with Proposition~\ref{prop1} {\rm(ii)}.
\end{example}

\begin{proposition}
Let $(P,\le,0,1)$ be a finite complemented poset such that $x\mapsto(x^+)^+$ is injective and $a\in L$ and assume $(a^+)^+\ne a$. Then there exists some $b\in(a^+)^+$ with $(b^+)^+=b$.
\end{proposition}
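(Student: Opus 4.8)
The plan is to produce the desired $b$ as a minimal element, in a suitable sense, of the closed set $(a^+)^+$, playing off the antitone behaviour of $^+$ against the finiteness of $P$ and the injectivity of $x\mapsto(x^+)^+$.

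First I would record the monotonicity that drives everything: if $b\in(a^+)^+$, then $b\perp y$ for every $y\in a^+$, so each such $y$ lies in $b^+$, i.e.\ $a^+\subseteq b^+$; part (ii) of the Lemma above then gives $(b^+)^+\subseteq(a^+)^+$. Thus every $b\in(a^+)^+$ gives rise to a closed set $(b^+)^+$ that is again contained in $(a^+)^+$ and that contains $b$ itself, the latter by Proposition~\ref{prop1}(i).

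Since $P$ is finite, the family $\{(b^+)^+\mid b\in(a^+)^+\}$ is a finite collection of subsets of $P$ and hence has a member $(b_0^+)^+$ that is minimal with respect to $\subseteq$; note that $b_0\in(a^+)^+$. I would then show $(b_0^+)^+=\{b_0\}$. Indeed, take any $c\in(b_0^+)^+$. The monotonicity above, applied to $b_0$ in place of $a$, gives $(c^+)^+\subseteq(b_0^+)^+$; moreover $c\in(b_0^+)^+\subseteq(a^+)^+$, so $(c^+)^+$ belongs to the family. Minimality of $(b_0^+)^+$ forces $(c^+)^+=(b_0^+)^+$, and injectivity of $x\mapsto(x^+)^+$ then gives $c=b_0$. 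Since $c\in(b_0^+)^+$ was arbitrary and $b_0\in(b_0^+)^+$, we conclude $(b_0^+)^+=\{b_0\}$, that is $(b_0^+)^+=b_0$ with $b_0\in(a^+)^+$, as required. The hypothesis $(a^+)^+\ne a$ merely excludes the trivial choice $b=a$; the argument does not otherwise use it, and in fact it forces $b_0\ne a$.

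The main obstacle is the realisation that $\subseteq$-minimality within this family, combined with injectivity, collapses $(b_0^+)^+$ to the singleton $\{b_0\}$; once the implication $b\in(a^+)^+\Rightarrow(b^+)^+\subseteq(a^+)^+$ is in place, everything else is routine. The only point I would check carefully is that $(a^+)^+$ is genuinely closed, so that the sets $(b^+)^+$ are well-defined and really lie inside $(a^+)^+$; this is immediate from the closure identity $\big((a^+)^+\big)^+=a^+$.
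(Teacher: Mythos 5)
Your proof is correct and is essentially the paper's own argument: both rest on the key monotonicity $b\in(a^+)^+\Rightarrow a^+\subseteq b^+\Rightarrow(b^+)^+\subseteq(a^+)^+$, combined with injectivity of $x\mapsto(x^+)^+$ and finiteness of $P$. The only difference is presentational: the paper descends along an explicit strictly decreasing chain $(a^+)^+\supsetneqq(a_1^+)^+\supsetneqq(a_2^+)^+\supsetneqq\cdots$ until it must terminate in a singleton, whereas you invoke the extremal principle to pick a $\subseteq$-minimal member of the family $\left\{(b^+)^+\mid b\in(a^+)^+\right\}$ and then collapse it to a singleton via injectivity --- in a finite setting these are the same finite-descent idea.
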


\begin{proof}
Let $a_1\in(a^+)^+\setminus\{a\}$. Then $(a_1^+)^+\subseteq(a^+)^+$. Since $a_1\ne a$ and $x\mapsto(x^+)^+$ is injective we conclude $(a_1^+)^+\subsetneqq(a^+)^+$. Now either $(a_1^+)^+=a_1$ or there exists some $a_2\in(a_1^+)^+\setminus\{a_1\}$. Then $(a_2^+)^+\subseteq(a_1^+)^+$. Since $a_2\ne a_1$ and $x\mapsto(x^+)^+$ is injective we conclude $(a_2^+)^+\subsetneqq(a_1^+)^+$. Now either $(a_2^+)^+=a_2$ or there exists some $a_3\in(a_2^+)^+\setminus\{a_2\}$. Since $P$ is finite and $(a_1^+)^+\supsetneqq (a_2^+)^+\supsetneqq\cdots$ there exists some $n\ge1$ with $|(a_n^+)^+|=1$, i.e.\ $(a_n^+)^+=a_n$ and we have $a_n\in(a_n^+)^+\subseteq(a_{n-1}^+)^+\subseteq\cdots\subseteq( a_1^+)^+\subseteq(a^+)^+$.
\end{proof}

A {\em poset} is said to be {\em of finite length} if it has no infinite chains. Let $(P,\le)$ be a poset of finite length and $A\subseteq P$. Then $\Max A\ne\emptyset$, $\Min A\ne\emptyset$, $U(A)=U(\Max A)$, $L(A)=L(\Min A)$, $A\le_1\Max A$ and $\Min A\le_2A$.

\begin{lemma}\label{lem1}
Let $\mathbf P=(P,\le)$ be a poset of fine length and $A,B\subseteq P$ with $A\subseteq B$. Then the following holds:
\begin{enumerate}[{\rm(i)}]
\item $\Max A\le_1\Max B$ and $\Max L(B)\le_1\Max L(A)$,
\item $\Min B\le_2\Min A$ and $\Min U(A)\le_2\Min U(B)$.
\end{enumerate}
\end{lemma}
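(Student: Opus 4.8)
The plan is to reduce all four assertions to two dual monotonicity statements about the operators $\Max$ and $\Min$ under set inclusion, and then feed the cone operators $L$ and $U$ into these via their antitone behaviour. Concretely, I would first isolate the two core claims: (1) if $C\subseteq D$ then $\Max C\le_1\Max D$, and (2) if $C\subseteq D$ then $\Min D\le_2\Min C$. Both halves of (i) are instances of (1) and both halves of (ii) are instances of (2).

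For claim (1) I would argue directly: pick any $x\in\Max C$. Then $x\in C\subseteq D$, so $x\in D$, and by the basic property $D\le_1\Max D$ recalled above for posets of finite length there is some $y\in\Max D$ with $x\le y$. Since $x$ was arbitrary, $\Max C\le_1\Max D$. Claim (2) is the order-dual: for $y\in\Min C\subseteq D$ the property $\Min D\le_2 D$ yields some $x\in\Min D$ with $x\le y$, giving $\Min D\le_2\Min C$. The case $C=\emptyset$, and likewise that of an empty cone, is covered vacuously, since $\le_1$ and $\le_2$ are ``for all $\dots$ there exists'' statements.

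To obtain the stated inequalities I would then specialise. The first halves, $\Max A\le_1\Max B$ and $\Min B\le_2\Min A$, are claim (1) and claim (2) applied verbatim to $C=A$, $D=B$. For the cone halves I would invoke that $(U,L)$ is a Galois correspondence, hence antitone, so that $A\subseteq B$ gives $L(B)\subseteq L(A)$ and $U(B)\subseteq U(A)$. Applying claim (1) to $C=L(B)\subseteq L(A)=D$ then yields $\Max L(B)\le_1\Max L(A)$, and applying claim (2) to $C=U(B)\subseteq U(A)=D$ yields $\Min U(A)\le_2\Min U(B)$, as required.

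There is no real obstacle here; the only points to keep straight are the direction reversals. The inequalities in (i) and (ii) are dual to each other (swap $\Max\leftrightarrow\Min$, $\le_1\leftrightarrow\le_2$, $U\leftrightarrow L$), and the antitonicity of the cone operators flips the inclusion $A\subseteq B$ into $L(B)\subseteq L(A)$ and $U(B)\subseteq U(A)$, so one must check that the element whose existence is asserted lands in the correct one of $\Max L(A)$ versus $\Max L(B)$ (and similarly for $\Min U$). Once the two core claims are set up, each of the four statements is a one-line specialisation.
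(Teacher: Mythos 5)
Your proof is correct and follows essentially the same route as the paper: both arguments rest on the finite-length facts $D\le_1\Max D$ and $\Min D\le_2 D$, derive the core monotonicity $\Max C\le_1\Max D$ (resp.\ $\Min D\le_2\Min C$) from $C\subseteq D$, and then obtain the cone halves by applying antitonicity of $L$ and $U$ before specialising, with (ii) handled by duality. The only difference is presentational: you isolate the two core claims explicitly, whereas the paper writes the chain $\Max A\subseteq A\subseteq B\le_1\Max B$ inline.
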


\begin{proof}
According to the remark at the beginning of this section we have $A\le_1\Max A$. Now $\Max A\subseteq A\subseteq B\le_1\Max B$ and therefore $\Max A\le_1\Max B$. Finally we conclude $L(B)\subseteq L(A)$ and therefore $\Max L(B)\le_1\Max L(A)$. This shows (i). The proof of (ii) is dual to that of (i).
\end{proof}

In what follows we study connections among the sets $\Min U(x^+,y^+)$, $\Max L(x^+,y^+)$, $\big(\Min U(x,y)\big)^+$ and $\big(\Max L(x,y)\big)^+$.

\begin{theorem}\label{th2}
Let $\mathbf P=(P,\le,0,1)$ be a complemented poset of finite length and consider the following statements:
\begin{enumerate}[{\rm(i)}]
\item for all $x,y\in P$, $x\le y$ implies $y^+\le x^+$,
\item for all $x,y\in P$, $x\le y$ implies $y^+\le_1x^+$,
\item for all $x,y\in P$, $x\le y$ implies $y^+\le_2x^+$,
\item $\big(\Min U(x,y)\big)^+\le_1\Max L(x^+,y^+)$ for all $x,y\in P$,
\item $\Min U(x^+,y^+)\le_2\big(\Max L(x,y)\big)^+$ for all $x,y\in P$.
\end{enumerate}
Then {\rm(i)} implies {\rm(ii)} and {\rm(iii)}. Moreover, {\rm(iv)} implies {\rm(ii)}, and {\rm(v)} implies {\rm(iii)}. If for all $x\in P$, $x^+$ has a greatest element $x_2$ then {\rm(i)} implies {\rm(iv)}. If, finally, for all $x\in P$, $x^+$ has a smallest element $x_1$ then {\rm(i)} implies {\rm(v)}.
\end{theorem}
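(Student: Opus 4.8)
The plan is to split the five implications into two groups, using throughout that each $x^+$ is non-empty (by complementedness) and that $\le_1,\le_2$ are quasiorders on $2^P$, so transitivity is freely available. The easiest group is (i)$\Rightarrow$(ii),(iii): the strong relation forces both weak ones as soon as the sets involved are non-empty, since if $A\le B$ with $B\ne\emptyset$ then any $y\in B$ witnesses $A\le_1B$, and dually $A\ne\emptyset$ gives $A\le_2B$. Applying this with $A=y^+$ and $B=x^+$, both non-empty because $\mathbf P$ is complemented, turns (i) directly into (ii) and (iii).

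For (iv)$\Rightarrow$(ii) I would first simplify the left-hand side: when $x\le y$ one has $U(x,y)=U(y)$, whose least element is $y$, so $\Min U(x,y)=\{y\}$ and hence $\big(\Min U(x,y)\big)^+=y^+$. Thus (iv) reads $y^+\le_1\Max L(x^+,y^+)$. Since $\Max L(x^+,y^+)\subseteq L(x^+,y^+)\subseteq L(x^+)$ and every lower bound of the non-empty set $x^+$ lies below some element of $x^+$, I obtain $\Max L(x^+,y^+)\le_1x^+$; transitivity of $\le_1$ then gives $y^+\le_1\Max L(x^+,y^+)\le_1x^+$, which is (ii). The implication (v)$\Rightarrow$(iii) is the order-dual: $x\le y$ forces $L(x,y)=L(x)$, so $\Max L(x,y)=\{x\}$ and $\big(\Max L(x,y)\big)^+=x^+$, and one sandwiches $y^+\le_2\Min U(x^+,y^+)\le_2x^+$ using $\Min U(x^+,y^+)\subseteq U(y^+)$ together with $y^+\le_2U(y^+)$.

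The two remaining implications rest on a single containment, which I expect to be the crux. Assuming (i), I claim $\big(\Min U(x,y)\big)^+\subseteq L(x^+,y^+)$: if $p\perp m$ for every $m\in\Min U(x,y)$, fix one such $m$; then $x\le m$ and $y\le m$, so (i) yields $m^+\le x^+$ and $m^+\le y^+$, and since $p\in m^+$ this says $p\le q$ for every $q\in x^+\cup y^+$, i.e.\ $p\in L(x^+)\cap L(y^+)=L(x^+,y^+)$. Because $\mathbf P$ has finite length, $L(x^+,y^+)=\big(\Max L(x^+,y^+)\big)^\downarrow$, so the containment upgrades to $\big(\Min U(x,y)\big)^+\le_1\Max L(x^+,y^+)$, which is (iv); here the greatest element $x_2$ of $x^+$ is what lets one exhibit an explicit witness in $\Max L(x^+,y^+)$. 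The implication (i)$\Rightarrow$(v) is entirely dual, built on $\big(\Max L(x,y)\big)^+\subseteq U(x^+,y^+)$ together with $\Min U(x^+,y^+)\le_2U(x^+,y^+)$ and the smallest element $x_1$ of $x^+$. The step demanding the most care is exactly this containment, and more generally keeping the three relations $\le$, $\le_1$, $\le_2$ apart: every implication quietly uses that $x^+\ne\emptyset$ and that, in a poset of finite length, passing to $\Max$ or $\Min$ preserves $\le_1$ or $\le_2$ respectively.
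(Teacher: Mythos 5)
Your proposal is correct and follows essentially the same route as the paper: (i)$\Rightarrow$(ii),(iii) from non-emptiness of the sets $x^+$, (iv)$\Rightarrow$(ii) and (v)$\Rightarrow$(iii) by specializing to comparable pairs, where $\Min U(x,y)=\{y\}$ and $\Max L(x,y)=\{x\}$, and then chaining through $\Max L(x^+,y^+)$ resp.\ $\Min U(x^+,y^+)$, and (i)$\Rightarrow$(iv),(v) via the containments $\big(\Min U(x,y)\big)^+\subseteq L(x^+,y^+)$ and $\big(\Max L(x,y)\big)^+\subseteq U(x^+,y^+)$ upgraded to $\le_1$ resp.\ $\le_2$ by finite length. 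One minor inaccuracy in your commentary: the witness in $\Max L(x^+,y^+)$ comes purely from finite length (exactly as your own argument shows), not from the greatest element $x_2$, so your proof never actually uses the greatest/smallest-element hypotheses --- which is harmless, since it proves a slightly stronger statement (the paper's own invocation of $d_2$ and $f_1$ is equally dispensable, as $c\in d^+\le a^+,b^+$ already gives $c\in L(a^+,b^+)$ directly).
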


\begin{proof}
Let $a,b\in P$. \\
That (i) implies (ii) and (iii) is clear. \\
If $\mathbf P$ satisfies (iv) and $a\le b$ then
\[
b^+=\big(\Min U(a,b)\big)^+\le_1\Max L(a^+,b^+)\le a^+.
\]
If $\mathbf P$ satisfies (v) and $a\le b$ then
\[
b^+=\Min U(a^+,b^+)\le_2\big(\Max L(a,b)\big)^+=a^+.
\]
Now assume (i) and that for all $x\in P$, $x^+$ has a greatest element $x_2$. Let $c\in\big(\Min U(a,b)\big)^+$ and $d\in\Min U(a,b)$. Then $a,b\le d$ and hence $d^+\le a^+,b^+$ which implies $c\le d_2\le a^+,b^+$. Thus $c\in L(a^+,b^+)$ showing $\big(\Min U(a,b)\big)^+\subseteq L(a^+,b^+)$ whence $\big(\Min U(a,b)\big)^+\le_1\Max L(a^+,b^+)$. \\
Finally, assume (i) and that for all $x\in P$, $x^+$ has a smallest element $x_1$. Let $e\in\big(\Max L(a,b)\big)^+$ and $f\in\Max L(a,b)$. Then $f\le a,b$ and hence $a^+,b^+\le f^+$ which implies $a^+,b^+\le f_1\le e$. Thus $e\in U(a^+,b^+)$ showing $\big(\Max L(a,b)\big)^+\subseteq U(a^+,b^+)$ whence $\Min U(a^+,b^+)\le_2\big(\Max L(a,b)\big)^+$.
\end{proof}

The next example shows that the conditions (i) -- (iii) of Theorem~\ref{th2} are far from being trivial, namely, we provide a complemented poset which is not a lattice where these conditions are violated.

\begin{example}
Consider the complemented poset depicted in Fig.~7

\vspace*{-4mm}

\begin{center}
\setlength{\unitlength}{7mm}
\begin{picture}(18,20)
\put(9,1){\circle*{.3}}
\put(4,6){\circle*{.3}}
\put(3,7){\circle*{.3}}
\put(15,7){\circle*{.3}}
\put(1,9){\circle*{.3}}
\put(5,9){\circle*{.3}}
\put(9,9){\circle*{.3}}
\put(13,9){\circle*{.3}}
\put(17,9){\circle*{.3}}
\put(1,11){\circle*{.3}}
\put(5,11){\circle*{.3}}
\put(9,11){\circle*{.3}}
\put(13,11){\circle*{.3}}
\put(17,11){\circle*{.3}}
\put(3,13){\circle*{.3}}
\put(15,13){\circle*{.3}}
\put(14,14){\circle*{.3}}
\put(9,19){\circle*{.3}}
\put(9,1){\line(-1,1)8}
\put(9,1){\line(0,1)9}
\put(9,10){\line(0,1)9}
\put(9,1){\line(1,1)8}
\put(9,19){\line(-1,-1)8}
\put(9,19){\line(1,-1)8}
\put(9,9){\line(1,1)5}
\put(9,11){\line(-1,-1)5}
\put(3,7){\line(1,1)2}
\put(1,9){\line(2,1)4}
\put(1,11){\line(2,-1)4}
\put(3,13){\line(1,-1)2}
\put(15,7){\line(-1,1)2}
\put(17,9){\line(-2,1)4}
\put(17,11){\line(-2,-1)4}
\put(15,13){\line(-1,-1)2}
\put(1,9){\line(0,1)2}
\put(5,9){\line(0,1)2}
\put(13,9){\line(0,1)2}
\put(17,9){\line(0,1)2}
\put(8.85,.3){$0$}
\put(8.85,19.4){$1$}
\put(3.35,5.85){$a$}
\put(2.35,6.85){$b$}
\put(.35,8.85){$d$}
\put(15.4,6.85){$c$}
\put(17.4,8.85){$h$}
\put(.35,10.85){$i$}
\put(2.35,12.85){$n$}
\put(17.4,10.85){$m$}
\put(15.4,12.85){$o$}
\put(14.4,13.85){$p$}
\put(5.4,8.85){$e$}
\put(8.35,8.85){$f$}
\put(12.35,8.85){$g$}
\put(5.4,10.85){$j$}
\put(12.35,10.85){$l$}
\put(9.4,10.85){$k$}
\put(4.8,-.75){{\rm Figure~7. Complemented poset}}
\end{picture}
\end{center}

\vspace*{4mm}

We have $f\le k$, $f^+=[b,n]$ and $k^+=[c,o]$ and hence $k^+\not\le f^+$, $k^+\not\le_1f^+$ and $k^+\not\le_2f^+$.
\end{example}

That the generalized De Morgan's laws do not hold in general is shown by the following lemma.

\begin{lemma}
Let $\mathbf P=(P,\le,0,1)$ be a complemented poset of finite length that is not uniquely complemented. Then the {\em generalized de Morgan's laws}
\begin{align*}
\big(\Min U(x,y)\big)^+ & \approx\Max L(x^+,y^+), \\	
\big(\Max L(x,y)\big)^+ & \approx\Min U(x^+,y^+)
\end{align*}
do not hold.
\end{lemma}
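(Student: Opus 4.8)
The plan is to exploit the failure of unique complementation directly. Since $\mathbf P$ is not uniquely complemented, I fix an element $a\in P$ possessing at least two distinct complements, so that $a^+$ contains two distinct elements, and then I test both claimed identities on the diagonal $x=y=a$. I expect the two left-hand sides to collapse to $a^+$, while the right-hand sides reduce to $\Max L(a^+)$ and $\Min U(a^+)$; these cannot equal $a^+$ once $|a^+|\ge2$, which refutes both laws.

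First I would record the diagonal values of the left-hand sides. Since $U(a,a)=\,\uparrow a$ has least element $a$, we get $\Min U(a,a)=\{a\}$ and hence $\big(\Min U(a,a)\big)^+=a^+$; dually $\Max L(a,a)=\{a\}$ and $\big(\Max L(a,a)\big)^+=a^+$. On the right-hand sides the union convention gives $L(a^+,a^+)=L(a^+)$ and $U(a^+,a^+)=U(a^+)$, so at $x=y=a$ the two generalized de Morgan laws specialize exactly to the assertions $a^+=\Max L(a^+)$ and $a^+=\Min U(a^+)$. It therefore suffices to refute these two equalities.

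The key step is the observation that a set with two distinct elements is never contained in its own set of lower (or upper) bounds. Concretely, I pick distinct $b,c\in a^+$. A common lower bound of $a^+$ lies below every element of $a^+$, so if both $b$ and $c$ belonged to $L(a^+)$ we would have $b\le c$ and $c\le b$, forcing $b=c$, a contradiction. Hence at least one of $b,c$ lies in $a^+\setminus L(a^+)$, and since $\Max L(a^+)\subseteq L(a^+)$ this element witnesses $a^+\ne\Max L(a^+)$. The dual argument, with upper bounds in place of lower bounds, yields $a^+\ne\Min U(a^+)$. Either inequality already shows that the corresponding law fails, so both generalized de Morgan laws fail at $x=y=a$.

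The main obstacle here is conceptual placement rather than any hard computation: the finite-length hypothesis is used only so that $\Max$ and $\Min$ are defined and nonempty on the relevant cones, while non-unique complementation is precisely what supplies the two distinct elements $b,c\in a^+$ that drive the contradiction. The only routine care required is the bookkeeping of the identifications $\Min U(a,a)=\{a\}$, $\Max L(a,a)=\{a\}$ and $L(a^+,a^+)=L(a^+)$, $U(a^+,a^+)=U(a^+)$.
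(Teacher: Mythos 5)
Your proof is correct and follows essentially the same route as the paper: both reduce the purported laws to the equalities $a^+=\Max L(a^+)$ and $a^+=\Min U(a^+)$ for an element $a$ with two distinct complements $b,c$, and then derive the contradiction $b\le c$ and $c\le b$ from $a^+\subseteq L(a^+)$, respectively $a^+\subseteq U(a^+)$. The only cosmetic difference is the substitution used to get there — you evaluate the identities on the diagonal $x=y=a$, while the paper substitutes $(x,y)=(a,0)$ and $(x,y)=(a,1)$ — which changes nothing of substance.
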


\begin{proof}
Since $\mathbf P$ is complemented, but not uniquely complemented, there exists some $a\in P$ with $|a^+|>1$. Let $b,c\in a^+$ with $b\ne c$. Suppose $\big(\Min U(x,y)\big)^+\approx\Max L(x^+,y^+)$ holds. Then
\[
a^+=\big(\Min U(a,0)\big)^+=\Max L(a^+,0^+)=\Max L(a^+)\subseteq L(a^+).
\]
Since $b,c\in a^+$ we have $b\in a^+\subseteq L(a^+)\subseteq L(c)$, i.e. $b\le c$. From symmetry reasons we have also $c\le b$ and hence $b=c$, a contradiction. Now suppose $\big(\Max L(x,y)\big)^+\approx\Min U(x^+,y^+)$ holds. Then
\[
a^+=\big(\Max L(a,1)\big)^+=\Min U(a^+,1^+)=\Min U(a^+)\subseteq U(a^+).
\]
Since $b,c\in a^+$ we have $c\in a^+\subseteq U(a^+)\subseteq U(b)$, i.e. $b\le c$. From symmetry reasons we have also $c\le b$ and hence $b=c$, a contradiction.
\end{proof}

\begin{example}
The complemented poset $(P,\le,0,1)$ visualized in Fig.~8
	
\vspace*{-4mm}
	
\begin{center}
\setlength{\unitlength}{7mm}
\begin{picture}(8,8)
\put(4,1){\circle*{.3}}
\put(1,3){\circle*{.3}}
\put(3,3){\circle*{.3}}
\put(5,3){\circle*{.3}}
\put(7,3){\circle*{.3}}
\put(1,5){\circle*{.3}}
\put(3,5){\circle*{.3}}
\put(5,5){\circle*{.3}}
\put(7,5){\circle*{.3}}
\put(4,7){\circle*{.3}}
\put(4,1){\line(-3,2)3}
\put(4,1){\line(-1,2)1}
\put(4,1){\line(1,2)1}
\put(4,1){\line(3,2)3}
\put(4,7){\line(-3,-2)3}
\put(4,7){\line(-1,-2)1}
\put(4,7){\line(1,-2)1}
\put(4,7){\line(3,-2)3}
\put(1,3){\line(0,1)2}
\put(1,3){\line(1,1)2}
\put(3,3){\line(-1,1)2}
\put(3,3){\line(0,1)2}
\put(5,3){\line(0,1)2}
\put(5,3){\line(1,1)2}
\put(7,3){\line(-1,1)2}
\put(7,3){\line(0,1)2}
\put(3.85,.3){$0$}
\put(.35,2.85){$a$}
\put(3.4,2.85){$b$}
\put(4.35,2.85){$c$}
\put(7.4,2.85){$d$}
\put(.35,4.85){$e$}
\put(3.4,4.85){$f$}
\put(4.35,4.85){$g$}
\put(7.4,4.85){$h$}
\put(3.85,7.4){$1$}
\put(-2,-.75){{\rm Figure~8. Non-distributive complemented poset}}
\end{picture}
\end{center}
	
\vspace*{4mm}

does not satisfy the generalized De Morgan's laws since
\begin{align*}
\big(\Min U(a,0)\big)^+ & =a^+=\{c,d,g,h\}\ne\{0\}=\Max L(a^+)=\Max L(a^+,0^+), \\
\big(\Max L(a,1)\big)^+ & =a^+=\{c,d,g,h\}\ne\{1\}=\Min U(a^+)=\Min U(a^+,1^+).
\end{align*}
\end{example}

\section{Derived operators}

Using the operator $^+$ we define other important operators in complemented posets of finite length as follows:

\begin{definition}
Let $(P,\le,0,1)$ be a complemented poset of finite length and $a,b\in P$. Define
\begin{align*}
          a\circ b & :=\Max L(a,b), \\
            a\to b & :=\Min U(a^+,b), \\
          a\odot b & :=\Max L\big(b,U(a,b^+)\big), \\
a\hookrightarrow b & :=\Min U\big(a^+,L(a,b)\big).
\end{align*}
\end{definition}

Observe that e.g.
\begin{align*}
\Min U\big(a^+,L(a,b)\big) & =\Min\big(U(a^+)\cap UL(a,b)\big)=\Min\Big(U(a^+)\cap U\big(\Max L(a,b)\big)\Big)= \\
                           & =\Min U\big(a^+,\Max L(a,b)\big).
\end{align*}

The following can be easily seen:
\begin{align*}
   x\circ y=y\circ x, & \text{ and }x\circ y=x\text{ if }x\le y, \\
            1\to x=x, & \text{ and }x\to y=1\text{ if }x\le y, \\
           x\odot1=x, & \text{ and }y\odot x=x\text{ if }x\le y, \\
1\hookrightarrow x=x, & \text{ and }x\hookrightarrow y=1\text{ if }x\le y.
\end{align*}

The couple $(\circ,\to)$ is said to form an {\em adjoint pair} if
\[
x\circ y\le z\text{ is equivalent to }x\le y\to z
\]
for all $x,y,z\in P$.

\begin{definition}
We call a binary operator $*$ on a complemented poset $(P,\le,0,1)$ of finite length
\begin{itemize}
\item {\em monotone from the left} if $x\le y$ implies $x*z\le_1y*z$,
\item {\em monotone from the right} if $x\le y$ implies $z*x\le_2z*y$,
\item {\em weakly antitone from the left} if $x*y\ge1*y$,
\item {\em weakly monotone from the right} if $x*y\le x*1$
\end{itemize}
{\rm(}$x,y,z\in P${\rm)}.
\end{definition}

\begin{lemma}
Let $(P,\le,0,1)$ be a complemented poset of finite height. Then the following holds:
\begin{enumerate}[{\rm(i)}]
\item $\circ$ and $\odot$ are monotone from the left,
\item $\to$ and $\hookrightarrow$ are monotone from the right.
\end{enumerate}	
\end{lemma}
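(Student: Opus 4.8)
The plan is to obtain all four monotonicity statements as direct consequences of Lemma~\ref{lem1}, combined with the elementary observations that the cone operators $L$ and $U$ are antitone with respect to set inclusion. The common scheme is uniform: starting from the hypothesis $x\le y$, I push this inequality through the relevant cone operators to produce an inclusion between the two sets whose $\Max L$ (for $\circ,\odot$) or $\Min U$ (for $\to,\hookrightarrow$) define the operator values, and then read off $\le_1$ or $\le_2$ directly from Lemma~\ref{lem1}. The two building blocks I will invoke repeatedly are: $A\subseteq B$ implies $\Max A\le_1\Max B$ (Lemma~\ref{lem1}(i)), and $A\subseteq B$ implies $\Min B\le_2\Min A$ (Lemma~\ref{lem1}(ii)).

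First I would treat the two simple operators. For $\circ$, the hypothesis $x\le y$ gives $L(x)\subseteq L(y)$, hence $L(x,z)\subseteq L(y,z)$; applying Lemma~\ref{lem1}(i) with $A=L(x,z)$ and $B=L(y,z)$ yields $x\circ z=\Max L(x,z)\le_1\Max L(y,z)=y\circ z$. Dually, for $\to$ the hypothesis gives $U(y)\subseteq U(x)$ (every upper bound of $y$ is an upper bound of $x$), hence $U(z^+,y)\subseteq U(z^+,x)$; applying Lemma~\ref{lem1}(ii) with $A=U(z^+,y)$ and $B=U(z^+,x)$ gives $z\to x=\Min U(z^+,x)\le_2\Min U(z^+,y)=z\to y$.

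For the composite operators $\odot$ and $\hookrightarrow$ the same scheme applies, but now the inclusion is pushed through \emph{two} antitone steps. For $\odot$: from $x\le y$ I get $U(y,z^+)\subseteq U(x,z^+)$, and then antitonicity of $L$ reverses this to $L\big(z,U(x,z^+)\big)\subseteq L\big(z,U(y,z^+)\big)$; Lemma~\ref{lem1}(i) then delivers $x\odot z\le_1 y\odot z$. For $\hookrightarrow$: from $x\le y$ I get $L(z,x)\subseteq L(z,y)$, and antitonicity of $U$ reverses this to $U\big(z^+,L(z,y)\big)\subseteq U\big(z^+,L(z,x)\big)$; Lemma~\ref{lem1}(ii) then delivers $z\hookrightarrow x\le_2 z\hookrightarrow y$. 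The only real care required, and hence the main (minor) obstacle, is bookkeeping the direction of the inclusions through these nested cone operators, where the two antitone steps compose to restore the original orientation, and correctly pairing each operator with the appropriate half of Lemma~\ref{lem1} ($\Max A\le_1\Max B$ for $\circ,\odot$ versus $\Min B\le_2\Min A$ for $\to,\hookrightarrow$). Finiteness of length guarantees throughout that the relevant $\Max$ and $\Min$ sets are non-empty, so the statements are not vacuous.
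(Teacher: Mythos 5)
Your proof is correct and follows essentially the same route as the paper's: in each of the four cases the hypothesis $x\le y$ is turned into an inclusion between the sets defining the operator values (with one extra antitone reversal for $\odot$ and $\hookrightarrow$), and then the monotonicity of $\Max$ with respect to $\le_1$ and of $\Min$ with respect to $\le_2$ from Lemma~\ref{lem1} is applied. The only cosmetic difference is that you cite Lemma~\ref{lem1} explicitly where the paper uses these facts tacitly.
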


\begin{proof}
Let $a,b,c\in P$ with $a\le b$.
\begin{enumerate}[(i)]
\item We have $L(a,c)\subseteq L(b,c)$ and hence
\[
a\circ c=\Max L(a,c)\le_1\Max L(b,c)=b\circ c.
\]
Moreover, we have $\{c\}\cup U(b,c^+)\subseteq\{c\}\cup U(a,c^+)$ and hence
\[
L\big(c,U(a,c^+)\big)\subseteq L\big(c,U(b,c^+)\big)
\]
whence
\[
a\odot c=\Max L\big(c,U(a,c^+)\big)\le_1\Max L\big(c,U(b,c^+)\big)=b\odot c.
\]
\item We have $U(c^+,b)\subseteq U(c^+,a)$ and hence
\[
c\to a=\Min U(c^+,a)\le_2\Min U(c^+,b)=c\to b.
\]
Moreover, we have $\{c^+\}\cup L(c,a)\subseteq\{c^+\}\cup L(c,b)$ and hence
\[
U\big(c^+,L(c,b)\big)\subseteq U\big(c^+,L(c,a)\big)
\]
whence
\[
c\hookrightarrow a=\Min U\big(c^+,L(c,a)\big)\le_2\Min U\big(c^+,L(c,b)\big)=c\hookrightarrow b.
\]
\end{enumerate}	
\end{proof}

On the other hand, weak monotonicity from the right of the operator $\odot$ yields that the complementation of the poset in question is unique.

\begin{lemma}
Let $\mathbf P=(P,\le,0,1)$ be a complemented poset of finite length and assume $\odot$ to be weakly monotone from the right. Then $\mathbf P$ is uniquely complemented.
\end{lemma}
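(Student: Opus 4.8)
The plan is to argue by contraposition: assuming $\mathbf P$ is \emph{not} uniquely complemented, I would produce a pair of elements witnessing the failure of weak monotonicity from the right of $\odot$. So suppose some $a\in P$ has two distinct complements $b,c\in a^+$ with $b\ne c$. Since $\perp$ is symmetric, $a\perp b$ and $a\perp c$ give $a\in b^+$ and $a\in c^+$, so $a$ is simultaneously a complement of $b$ and of $c$. The aim is then to show that any two elements sharing a complement must be comparable in a prescribed direction, which will collide with $b\ne c$.

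The key computation is the following claim: if $x,y\in P$ share a complement, say $t\in x^+\cap y^+$, then $x\odot y=\{y\}$. Indeed, because $t\in y^+$, every element of $U(x,y^+)$ is an upper bound of $y^+$ and hence lies above $t$, so $U(x,y^+)\subseteq U(x,t)$; and because $t\in x^+$ we have $t\perp x$, i.e.\ $U(x,t)=\{1\}$. Since $1\in U(x,y^+)$ always, this forces $U(x,y^+)=\{1\}$, and therefore
\[
L\big(y,U(x,y^+)\big)=L(y,1)=L(y)=\{z\in P\mid z\le y\},
\]
whose unique maximal element is $y$. Hence $x\odot y=\Max L\big(y,U(x,y^+)\big)=\{y\}$. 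This collapse is where essentially all the content lies; the remainder is bookkeeping.

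Next I would combine the claim with the identity $x\odot 1=x$ (already recorded among the easily verified properties, and traceable to $1^+=\{0\}$). Weak monotonicity from the right of $\odot$ reads $x\odot y\le x\odot 1$, i.e.\ $\{y\}\le\{x\}$, which by the definition of $\le$ on subsets simply means $y\le x$, and this holds whenever $x$ and $y$ share a complement. Applying it to the pair $b,c$, both of which admit $a$ as a common complement regardless of their order: taking $(x,y)=(c,b)$ yields $b\le c$, while taking $(x,y)=(b,c)$ yields $c\le b$. Thus $b=c$, contradicting $b\ne c$. Consequently no element can have two distinct complements, and $\mathbf P$ is uniquely complemented.

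The only genuine obstacle is the equality $U(x,y^+)=\{1\}$: one must observe that a single shared complement $t$ plays two roles at once, bounding $y^+$ from below (so that $U(x,y^+)\subseteq U(x,t)$) while annihilating $x$ from above (so that $U(x,t)=\{1\}$). After that, finite length — or merely the fact that a principal down set has its generator as greatest element — gives $\Max L(y)=\{y\}$, and the identity $x\odot 1=x$ together with the symmetric application to both orderings of $b,c$ closes the argument without any case analysis.
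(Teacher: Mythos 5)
Your proof is correct and follows essentially the same route as the paper's: the paper likewise takes $b,c\in a^+$, observes $U(c,b^+)=\{1\}$ because $a\in b^+$ and $c\vee a=1$ (your shared-complement claim with $t=a$), concludes $c\odot b=\Max L\big(b,U(c,b^+)\big)=b\le c\odot 1=c$ by weak monotonicity from the right, and finishes by symmetry. Your framing as a contradiction and the slightly more general statement of the key claim are only cosmetic differences.
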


\begin{proof}
Let $a\in P$ and $b,c\in a^+$. Then $U(c,b^+)=1$ since $a\in b^+$ and $c\vee a=1$. Hence
\[
b=\Max L(b,1)=\Max L\big(b,U(c,b^+)\big)=c\odot b\le c\odot1=c.
\]
Now $c\le b$ follows from symmetry reasons. Together we obtain $b=c$.	
\end{proof}

The next two results show how the properties of the pair $(\odot,\hookrightarrow)$ influence the behavior of the pair $(\circ,\to)$.

\begin{theorem}\label{th1}
Let $(P,\le,0,1)$ be a complemented poset of finite length and $a,b,c\in P$. Then the following hold:
\begin{enumerate}[{\rm(i)}]
\item If $\hookrightarrow$ is weakly antitone from the left then $a\circ b\le c$ implies $a\le b\to c$.
\item If $\odot$ is weakly monotone from the right then $a\le b\to c$ implies $a\circ b\le c$.
\end{enumerate}
\end{theorem}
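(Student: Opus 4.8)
The plan is to prove the two implications separately, in each case first re-expressing the hypothesis on the derived operator as a concrete order statement by evaluating it at the unit, and then translating the set-relations $\le,\le_1,\le_2$ into inclusions of cones. Throughout I will use that $\mathbf P$ has finite length, so that for every subset $A$ one has $L(A)=L(\Max A)$ and $U(A)=U(\Min A)$, and every element of $A$ lies below some element of $\Max A$ and above some element of $\Min A$.

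For (i), I would first note that $1\hookrightarrow y=\Min U(1^+,L(1,y))=\Min U(y)=y$, so that ``$\hookrightarrow$ weakly antitone from the left'' means precisely $y\le x\hookrightarrow y=\Min U\big(x^+,L(x,y)\big)$ for all $x,y$, i.e.\ $y$ lies below every minimal element of $U\big(x^+,L(x,y)\big)$. Specializing to $x=b$, $y=a$ gives $a\le\Min U\big(b^+,L(a,b)\big)$. On the other hand, $a\circ b\le c$ says $\Max L(a,b)\le c$, equivalently $L(a,b)\subseteq L(c)$, whence $UL(a,b)\supseteq UL(c)=U(c)$ and therefore $U\big(b^+,L(a,b)\big)=U(b^+)\cap UL(a,b)\supseteq U(b^+)\cap U(c)=U(b^+,c)$. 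Now any $w\in U(b^+,c)$ lies in $U\big(b^+,L(a,b)\big)$, hence dominates some $z\in\Min U\big(b^+,L(a,b)\big)$, and $a\le z\le w$; thus $a$ lies below every element of $U(b^+,c)$, i.e.\ $a\le\Min U(b^+,c)=b\to c$.

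For (ii), I would dually first compute $x\odot1=\Max L\big(1,U(x,1^+)\big)=\Max L\big(U(x)\big)=\Max L(x)=x$, so that ``$\odot$ weakly monotone from the right'' means $\Max L\big(y,U(x,y^+)\big)=x\odot y\le x\odot1=x$ for all $x,y$. The hypothesis $a\le b\to c=\Min U(b^+,c)$ says that $a$ lies below every minimal element of $U(b^+,c)$, hence below every element of $U(b^+,c)$, i.e.\ $a\in LU(c,b^+)$. Consequently $L(a)\subseteq LU(c,b^+)$ and therefore $L(a,b)=L(a)\cap L(b)\subseteq L(b)\cap LU(c,b^+)=L\big(b,U(c,b^+)\big)$. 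Specializing weak monotonicity to $x=c$, $y=b$ gives $\Max L\big(b,U(c,b^+)\big)=c\odot b\le c$. Now any maximal element $t$ of $L(a,b)$ lies in $L\big(b,U(c,b^+)\big)$ and hence below some maximal element of that set, which is $\le c$; thus $t\le c$, proving $a\circ b=\Max L(a,b)\le c$.

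The routine steps are the antitonicity of $U$ and $L$ under inclusion and the finite-length passage between a set and its minimal (maximal) elements; the one point that needs care is the correct reading of the ``weak'' hypotheses as the order inequalities $y\le x\hookrightarrow y$ and $x\odot y\le x$, obtained by evaluating $\hookrightarrow$ and $\odot$ at $1$ via $1\hookrightarrow y=y$ and $x\odot1=x$. I expect the main (if modest) obstacle to be bookkeeping the cone inclusions in the right direction, namely deriving $U(b^+,c)\subseteq U\big(b^+,L(a,b)\big)$ in (i) from $L(a,b)\subseteq L(c)$, and the symmetric inclusion $L(a,b)\subseteq L\big(b,U(c,b^+)\big)$ in (ii) from $a\in LU(c,b^+)$.
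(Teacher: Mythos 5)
Your proposal is correct and follows essentially the same route as the paper's proof: both read the weak hypotheses as the order inequalities $a\le b\hookrightarrow a=\Min U\big(b^+,L(a,b)\big)$ and $c\odot b=\Max L\big(b,U(c,b^+)\big)\le c$ via $1\hookrightarrow y=y$ and $x\odot1=x$, translate $a\circ b\le c$ and $a\le b\to c$ into the cone inclusions $L(a,b)\subseteq L(c)$ and $U(b^+,c)\subseteq U(a)$, and chain the inclusions $U(b^+,c)\subseteq U\big(b^+,L(a,b)\big)\subseteq U(a)$ and $L(a,b)\subseteq L\big(b,U(c,b^+)\big)\subseteq L(c)$ using finite length to pass between a cone and its minimal or maximal elements. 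Your element-chasing with $w$, $z$ and $t$ just makes explicit what the paper's set-inclusion notation compresses.
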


\begin{proof}
The following are equivalent: $a\circ b\le c$, $\Max L(a,b)\le c$, $L(a,b)\le c$. Moreover, the following are equivalent: $a\le b\to c$, $a\le\Min U(b^+,c)$, $a\le U(b^+,c)$.
\begin{enumerate}[(i)]
\item If $\hookrightarrow$ is weakly antitone from the left and $a\circ b\le c$ then
\[
a=1\hookrightarrow a\le b\hookrightarrow a=\Min U\big(b^+,L(b,a)\big)
\]
and $L(a,b)\le c$ and hence
\[
U(b^+,c)\subseteq U\big(b^+,L(a,b)\big)\subseteq U(a),
\]
i.e.\ $a\le U(b^+,c)$ showing $a\le b\to c$.
\item If $\odot$ is weakly monotone from the right and $a\le b\to c$ then
\[
\Max L\big(b,U(c,b^+)\big)=c\odot b\le c\odot1=c
\]
and $a\le U(b^+,c)$ and hence
\[
L(a,b)\subseteq L\big(U(b^+,c),b\big)\subseteq L(c),	
\]
i.e.\ $L(a,b)\le c$ showing $a\circ b\le c$.
\end{enumerate}
\end{proof}

\begin{remark}
In a Boolean poset $(P,\le,{}',0,1)$ the operator $\hookrightarrow$ is weakly antitone from the left and the operator $\odot$ is weakly monotone from the right since
\begin{align*}
x\hookrightarrow y & =\Min U\big(x',L(x,y)\big)=\Min UL\big(U(x',x),U(x',y)\big)=\Min ULU(x',y)= \\
                   & =\Min U(x',y)\ge y=1\hookrightarrow y, \\
          x\odot y & =\Max L\big(y,U(x,y')\big)=\Max LU\big(L(y,x),L(y,y')\big)=\Max LUL(y,x)= \\
                   & =\Max L(y,x)\le x=x\odot1.	
\end{align*}
\end{remark}

\begin{corollary}
If $\mathbf P$ is a complemented poset of finite length, $\hookrightarrow$ is weakly antitone from the left and $\odot$ is weakly monotone from the right then $(\circ,\to)$ form an adjoint pair.
\end{corollary}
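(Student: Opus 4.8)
The plan is to read the result off directly from Theorem~\ref{th1}, since the two hypotheses of the corollary are precisely what is required to invoke its two parts. By definition, $(\circ,\to)$ forms an adjoint pair exactly when the biconditional $a\circ b\le c\Leftrightarrow a\le b\to c$ holds for all $a,b,c\in P$. I would therefore fix an arbitrary triple $a,b,c\in P$ and establish the two implications separately, each from one half of the theorem.

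For the forward implication I would invoke Theorem~\ref{th1}(i): under the assumption that $\hookrightarrow$ is weakly antitone from the left, $a\circ b\le c$ implies $a\le b\to c$. For the converse I would invoke Theorem~\ref{th1}(ii): under the assumption that $\odot$ is weakly monotone from the right, $a\le b\to c$ implies $a\circ b\le c$. Since the corollary postulates both hypotheses simultaneously, both conditional statements are available at once; conjoining them yields the full equivalence for the chosen $a,b,c$, and as these were arbitrary, the adjointness condition holds throughout $P$.

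Because the entire substance of the argument has already been carried out in Theorem~\ref{th1}, I expect no genuine obstacle: the corollary is nothing more than the conjunction of the two one-directional statements proved there, together with the observation that the definition of adjoint pair is exactly the symmetric biconditional whose two halves those statements supply. The only points worth a moment's care are to confirm that the finite-length hypothesis (already present in Theorem~\ref{th1}) is inherited, and that no additional compatibility between the two hypotheses is needed — both of which are immediate from the setup.
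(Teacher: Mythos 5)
Your proposal is correct and matches exactly how the paper intends this corollary to be read: the paper gives no separate proof because, as you observe, the statement is just the conjunction of Theorem~\ref{th1}(i) and Theorem~\ref{th1}(ii), whose hypotheses are precisely the two assumptions of the corollary and whose conclusions together give the biconditional defining an adjoint pair. Your check that the finite-length hypothesis carries over and that no interaction between the two hypotheses is needed is also the right (and only) point of care here.
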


Of course, in a Boolean poset $(P,\le,{}',0,1)$ the operator $\hookrightarrow$ is weakly antitone from the left since in such a poset we have
\begin{align*}
x\hookrightarrow y & =\Min U\big(x',L(x,y)\big)=\Min U\big(L(x,y),x'\big)=\Min UL\big(U(x,x'),U(y,x')\big)= \\
                   & =\Min ULU(y,x')=\Min U(y,x')\ge y=1\hookrightarrow y.
\end{align*}

However, we can present an example of a non-uniquely complemented poset that is not a lattice and where $\hookrightarrow$ is weakly antitone from the left.

\begin{example}
The complemented poset $(P,\le,0,1)$ from Figure~8 is neither distributive since
\[
L\big(U(e,f),b\big)=L(1,b)=L(b)\ne0=LU(0,0)=LU\big(L(e,b),L(f,b)\big)
\]
nor a lattice since $a\vee b$ does not exist. Now let $x,y,z\in P$. Then
\begin{align*}
            U(x^+) & =\left\{
\begin{array}{ll}
P & \text{if }x=1, \\
1 & \text{otherwise},
\end{array}
\right. \\
           x\circ1 & =\Max L(x,1)=x, \\
            x\to y & =\Min U(x^+,y)=\left\{
\begin{array}{ll}
\Min U(0,y)=y & \text{if }x=1, \\
1             & \text{otherwise}.
\end{array}
\right. \\
x\hookrightarrow y & =\Min U\big(x^+,L(x,y)\big)=\left\{
\begin{array}{ll}
\Min U\big(0,L(1,y)\big)=y & \text{if }x=1, \\
1                          & \text{otherwise}.
\end{array}
\right.
\end{align*}
The assumption of {\rm(i)} of Theorem~\ref{th1} is satisfied since $x\hookrightarrow y\ge y=1\hookrightarrow y$. Accordingly, the conclusion of {\rm(i)} of Theorem~\ref{th1} holds: \\
If $x\circ y\le z$ and $y=1$ then $x=x\circ1=x\circ y\le z=1\to z=y\to z$. \\
If $x\circ y\le z$ and $y\ne1$ then $x\le1=y\to z$.
\end{example}

We now turn our attention again to the adjointness of the operators $\circ$ and $\to$. In the proof we use similar arguments as in \cite{CL19}.

\begin{theorem}\label{th3}
Let $(P,\le,0,1)$ be a complemented poset of finite length. If
\begin{equation}
U\big(x^+,L(x,y)\big)\subseteq U(y)\text{ for all }x,y\in P
\end{equation}
then for all $x,y,z\in P$
\[
x\circ y\le z\text{ implies }x\le y\to z.
\]
If
\begin{equation}
L\big(x,U(x^+,y)\big)\subseteq L(y)\text{ for all }x,y\in P
\end{equation}
then for all $x,y,z\in P$
\[
x\le y\to z\text{ implies }x\circ y\le z.
\]
\end{theorem}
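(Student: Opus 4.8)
The plan is to reduce each implication to a single application of the relevant hypothesis, by first rewriting the operator relations as cone inclusions (exactly as in the proof of Theorem~\ref{th1}) and then transporting these inclusions through the Galois correspondence $(U,L)$. The two translations I would rely on, both read off from the proof of Theorem~\ref{th1}, are: $x\circ y\le z$ is equivalent to $L(x,y)\subseteq L(z)$, and $x\le y\to z$ is equivalent to $U(y^+,z)\subseteq U(x)$. Alongside these I would use only elementary facts about cones: $L(x,y)=L(x)\cap L(y)$ and $U(y^+,z)=U(y^+)\cap U(z)$; the absorption identities $U\big(L(z)\big)=U(z)$ and $L\big(U(x)\big)=L(x)$; and the antitonicity of $U$ and $L$.

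For the first implication I would assume (1) together with $x\circ y\le z$, i.e.\ $L(x,y)\subseteq L(z)$. Applying the antitone map $U$ and using $U\big(L(z)\big)=U(z)$ turns this into $U(z)\subseteq U\big(L(x,y)\big)$. Intersecting both sides with $U(y^+)$ gives $U(y^+,z)=U(y^+)\cap U(z)\subseteq U(y^+)\cap U\big(L(x,y)\big)=U\big(y^+,L(x,y)\big)$. Now I would invoke (1), which as a universally quantified statement reads $U\big(s^+,L(s,t)\big)\subseteq U(t)$; specializing $s:=y$, $t:=x$ (and using $L(y,x)=L(x,y)$) yields exactly $U\big(y^+,L(x,y)\big)\subseteq U(x)$. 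Chaining the two inclusions gives $U(y^+,z)\subseteq U(x)$, which is the required $x\le y\to z$.

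For the second implication I would assume (2) together with $x\le y\to z$, i.e.\ $U(y^+,z)\subseteq U(x)$. Applying the antitone map $L$ and using $L\big(U(x)\big)=L(x)$ gives $L(x)\subseteq L\big(U(y^+,z)\big)$. Intersecting with $L(y)$ yields $L(x,y)=L(x)\cap L(y)\subseteq L\big(U(y^+,z)\big)\cap L(y)=L\big(y,U(y^+,z)\big)$. Reading (2) as $L\big(s,U(s^+,t)\big)\subseteq L(t)$ and specializing $s:=y$, $t:=z$ gives $L\big(y,U(y^+,z)\big)\subseteq L(z)$. Chaining produces $L(x,y)\subseteq L(z)$, i.e.\ $x\circ y\le z$, as wanted.

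The computations are short, so I do not expect the inequalities themselves to cause trouble; the hard part will be purely a matter of bookkeeping. Specifically, I must keep track of the fact that $U$ and $L$ reverse inclusions, and I must match the two free variables of hypotheses (1) and (2) to the terms $U\big(y^+,L(x,y)\big)$ and $L\big(y,U(y^+,z)\big)$ that actually arise — i.e.\ specializing the \emph{complemented} variable to $y$ rather than to $x$ — instead of to the superficially similar $U\big(x^+,L(x,y)\big)$ and $L\big(x,U(x^+,y)\big)$ as literally written in the statement. Once that substitution is pinned down, each direction reduces to one use of the hypothesis flanked by two monotonicity steps.
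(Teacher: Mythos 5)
Your proposal is correct and follows essentially the same route as the paper's own proof: both reduce $x\circ y\le z$ and $x\le y\to z$ to the cone inclusions $L(x,y)\subseteq L(z)$ and $U(y^+,z)\subseteq U(x)$, then intersect with $U(y^+)$ (respectively $L(y)$) and apply the hypothesis (1) (respectively (2)) exactly once, with the complemented variable specialized to $y$. Your closing remark about the variable bookkeeping matches what the paper does implicitly, so there is nothing to add.
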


\begin{proof}
Let $a,b,c\in P$. Then the following are equivalent:
\begin{align*}
   a\circ b & \le c, \\
\Max L(a,b) & \le c, \\
     L(a,b) & \le c, \\
     L(a,b) & \subseteq L(c), \\
       U(c) & \subseteq UL(a,b).
\end{align*}
Moreover, the following are equivalent;
\begin{align*}
a & \le b\to c, \\
a & \le\Min U(b^+,c), \\
a & \le U(b^+,c), \\
a & \in LU(b^+,c), \\
L(a) & \subseteq LU(b^+,c), \\
U(b^+,c) & \subseteq U(a).
\end{align*}
If $a\circ b\le c$ and (1) then $U(c)\subseteq UL(a,b)$ and hence
\[
U(b^+,c)=U(b^+)\cap U(c)\subseteq U(b^+)\cap UL(a,b)=LU\big(b^+,L(a,b)\big)\subseteq U(a)
\]
showing $a\le b\to c$. If, conversely, $a\le b\to c$ and (2) then $L(a)\subseteq LU(b^+,c)$ and hence
\[
L(a,b)=L(a)\cap L(b)\subseteq LU(b^+,c)\cap L(b)=L\big(U(b^+,c),b\big)\subseteq L(c)
\]
showing $a\odot b\le c$.
\end{proof}

\begin{example}
The poset from Figure~8 satisfies {\rm(1)} since
\[
U\big(x^+,L(x,y)\big)\left\{
\begin{array}{ll}
=U(y) 								 & \text{ if }x=1 \\
\subseteq U(x^+)=\{1\}\subseteq U(y) & \text{ otherwise}.
\end{array}
\right.
\]
\end{example}

\begin{remark}
Every Boolean poset $(P,\le,{}',0,1)$ satisfies {\rm(1)} and {\rm(2)} since
\begin{align*}
U\big(x',L(x,y)\big) & =UL\big(U(x',x),U(x',y)\big)=ULU(x',y)=U(x',y)\subseteq U(y), \\
L\big(x,U(x',y)\big) & =LU\big(L(x,x'),L(x,y)\big)=LUL(x,y)=L(x,y)\subseteq L(y).
\end{align*}
\end{remark}	

In the next theorem we prove a similar assertion as in Theorem~\ref{th3}, but for the operators $\odot$ and $\hookrightarrow$.

\begin{theorem}\label{th4}
Let $(P,\le,0,1)$ be a complemented poset of finite length. If
\begin{equation}
U\Big(x^+,L\big(x,U(x^+,y)\big)\Big)\subseteq U(y)\text{ for all }x,y\in P
\end{equation}
then for all $x,y,z\in P$
\[
x\odot y\le z\text{ implies }x\le y\hookrightarrow z.
\]
If
\begin{equation}
L\Big(x,U\big(x^+,L(x,y)\big)\Big)\subseteq L(y)\text{ for all }x,y\in P
\end{equation}
then for all $x,y,z\in P$
\[
x\le y\hookrightarrow z\text{ implies }x\odot y\le z.
\]
\end{theorem}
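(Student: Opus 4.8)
The plan is to imitate the proof of Theorem~\ref{th3} line for line, replacing the pair $(\circ,\to)$ by $(\odot,\hookrightarrow)$ and the hypotheses (1), (2) by (3), (4). First I would translate both sides of each implication into plain inclusions between cones. Exploiting finite length (so that $\Max A\le z$ is equivalent to $A\subseteq L(z)$, and $a\le\Min B$ is equivalent to $a\in L(B)$) together with the Galois identities $LU(z)=L(z)$, $UL(a)=U(a)$ and $U(C)\cap U(D)=U(C\cup D)$, I obtain, for fixed $a,b,z\in P$, the equivalences
\[
a\odot b\le z\iff L\big(b,U(a,b^+)\big)\subseteq L(z)
\]
and
\[
a\le b\hookrightarrow z\iff U\big(b^+,L(b,z)\big)\subseteq U(a).
\]
These are merely the definitions $a\odot b=\Max L\big(b,U(a,b^+)\big)$ and $b\hookrightarrow z=\Min U\big(b^+,L(b,z)\big)$ unwound through the Galois correspondence $(U,L)$.

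For part~(i) I would begin from $L\big(b,U(a,b^+)\big)\subseteq L(z)$. Since this set is also contained in $L(b)$, it lies in $L(b)\cap L(z)=L(b,z)$; applying the antitone map $U$ gives $UL(b,z)\subseteq UL\big(b,U(a,b^+)\big)$, and intersecting with $U(b^+)$ yields
\[
U\big(b^+,L(b,z)\big)=U(b^+)\cap UL(b,z)\subseteq U(b^+)\cap UL\big(b,U(a,b^+)\big)=U\Big(b^+,L\big(b,U(a,b^+)\big)\Big).
\]
At this point hypothesis~(3), instantiated at $x:=b$, $y:=a$, reads $U\big(b^+,L(b,U(a,b^+))\big)\subseteq U(a)$ (using $U(a,b^+)=U(b^+,a)$), so composing the two inclusions gives $U\big(b^+,L(b,z)\big)\subseteq U(a)$, which is exactly $a\le b\hookrightarrow z$.

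Part~(ii) is the mirror image. Starting from $U\big(b^+,L(b,z)\big)\subseteq U(a)$ and noting that this set is automatically contained in $U(b^+)$, I obtain $U\big(b^+,L(b,z)\big)\subseteq U(a)\cap U(b^+)=U(a,b^+)$. Applying the antitone map $L$ and intersecting with $L(b)$ produces $L\big(b,U(a,b^+)\big)\subseteq L\big(b,U(b^+,L(b,z))\big)$, and hypothesis~(4), instantiated at $x:=b$, $y:=z$, gives $L\big(b,U(b^+,L(b,z))\big)\subseteq L(z)$. Composing yields $L\big(b,U(a,b^+)\big)\subseteq L(z)$, i.e.\ $a\odot b\le z$.

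The routine part is the antitonicity of $U$ and $L$ and the identities $U(C)\cap U(D)=U(C\cup D)$ and its dual; the step I expect to demand the most care is the bookkeeping of the nested cones, so that the somewhat opaque hypotheses (3) and (4) attach to precisely the right inner cone. Concretely, the crux is to realise that in part~(i) one must first \emph{weaken} $L\big(b,U(a,b^+)\big)\subseteq L(z)$ to $L\big(b,U(a,b^+)\big)\subseteq L(b,z)$ before passing to upper cones, whereas in part~(ii) one must first \emph{strengthen} $U\big(b^+,L(b,z)\big)\subseteq U(a)$ to $U\big(b^+,L(b,z)\big)\subseteq U(a,b^+)$; once these intermediate inclusions are in place, the substitutions $y:=a$ in (3) and $y:=z$ in (4) make the hypotheses fire exactly.
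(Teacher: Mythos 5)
Your proposal is correct and takes essentially the same route as the paper's proof: both unwind $a\odot b\le z$ and $a\le b\hookrightarrow z$ into the cone inclusions $L\big(b,U(a,b^+)\big)\subseteq L(z)$ and $U\big(b^+,L(b,z)\big)\subseteq U(a)$, then sandwich these against hypotheses (3) and (4) instantiated at $x:=b$ with $y:=a$, respectively $y:=z$, exactly as the paper does. The only cosmetic difference is that you derive the unwinding equivalences from scratch, whereas the paper imports them from the proof of Theorem~\ref{th3}.
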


\begin{proof}
Let $a,b,c\in P$. From the proof of Theorem~\ref{th4} we have that $a\odot b\le c$ is equivalent to $L\big(b,U(a,b^+)\big)\subseteq L(c)$ and that $a\le b\hookrightarrow c$ is equivalent to $U\big(b^+,L(b,c)\big)\subseteq U(a)$. If $a\odot b\le c$ and (3) then $L\big(b,U(a,b^+)\big)\subseteq L(c)$ and hence
\begin{align*}
U\big(b^+,L(b,c)\big) & =U\big(b^+,L(b)\cap L(c)\big)\subseteq U\Big(b^+,L(b)\cap L\big(b,U(a,b^+)\big)\Big)= \\
                      & =U\Big(b^+,L\big(b,U(a,b^+)\big)\Big)\subseteq U(a)
\end{align*}
showing $a\le b\hookrightarrow c$. If, conversely, $a\le b\hookrightarrow c$ and (4) then $U\big(b^+,L(b,c)\big)\subseteq U(a)$ and hence
\begin{align*}
L\big(b,U(a,b^+)\big) & = L\big(b,U(a)\cap U(b^+)\big)\subseteq L\Big(b,U\big(b^+,L(b,c)\big)\cap U(b^+)\Big)= \\
                      & =L\Big(b,U\big(b^+,L(b,c)\big)\Big)\subseteq L(c)
\end{align*}
showing $a\odot b\le c$.
\end{proof}

\begin{remark}
Every Boolean poset $(P,\le,{}',0,1)$ satisfies {\rm(3)} and {\rm(4)} since
\begin{align*}
U\Big(x',L\big(x,U(x',y)\big)\Big) & =U\Big(x',LU\big(L(x,x'),L(x,y)\big)\Big)=U\big(x',LUL(x,y)\big)= \\
& =U\big(x',L(x,y)\big)=UL\big(U(x',x),U(x',y)\big)=ULU(x',y)= \\
& =U(x',y)\subseteq U(y), \\
L\Big(x,U\big(x',L(x,y)\big)\Big) & =L\Big(x,UL\big(U(x',x),U(x',y)\big)\Big)=L\big(x,ULU(x',y)\big)= \\
& =L\big(x,U(x',y)\big)=LU\big(L(x,x'),L(x,y)\big)=LUL(x,y)=L(x,y)\subseteq \\
& \subseteq L(y).
\end{align*}
\end{remark}	

The poset from Figure~8 satisfies (3) since
\[
U\Big(x^+,L\big(x,U(x^+,y)\big)\Big)\left\{
\begin{array}{ll}
=L(y)                                & \text{ if }x=1 \\
\subseteq U(x^+)=\{1\}\subseteq U(y) & \text{ otherwise}.
\end{array}
\right.
\]
Recall that a poset $\mathbf P=(P,\le)$ is called {\em modular} if one of the following equivalent conditions is satisfied:
\begin{align*}
U\big(L(x,y),z\big) & =UL\big(x,L(y,z)\big)\text{ for all }x,y,z\in P\text{ with }z\le x, \\
L\big(U(x,y),z\big) & =LU\big(x,U(y,z)\big)\text{ for all }x,y,z\in P\text{ with }x\le z.
\end{align*}

The next result shows that stronger versions of modularity imply adjointness if $\odot$ and $\hookrightarrow$.

\begin{corollary}\label{cor1}
Let $(P,\le,0,1)$ be a complemented poset of finite length. If
\begin{equation}
U\big(L(A,y),C\big)=UL\big(A,U(y,C)\big)\text{ for all }y\in P\text{ and all }A,C\in2^P\setminus\{\emptyset\}\text{ with }C\le A
\end{equation}
then for all $x,y,z\in P$
\[
x\odot y\le z\text{ implies }x\le y\hookrightarrow z.
\]
If
\begin{equation}
L\big(U(A,B),z\big)=LU\big(A,L(B,z)\big)\text{ for all }z\in P\text{ and all }A,B\in2^P\setminus\{\emptyset\}\text{ with }A\le z
\end{equation}
then for all $x,y,z\in P$
\[
x\odot y\le z\text{ implies }x\le y\hookrightarrow z.
\]
\end{corollary}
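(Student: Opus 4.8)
The plan is to reduce each hypothesis to a condition of Theorem~\ref{th4} and then quote that theorem. From the proof of Theorem~\ref{th4} we have the reformulations $a\odot b\le c\Leftrightarrow L\big(b,U(a,b^+)\big)\subseteq L(c)$ and $a\le b\hookrightarrow c\Leftrightarrow U\big(b^+,L(b,c)\big)\subseteq U(a)$, and Theorem~\ref{th4}~(i) already shows that condition~(3) yields $x\odot y\le z\Rightarrow x\le y\hookrightarrow z$. Since both parts of the corollary assert exactly this implication, it suffices to derive~(3) from~(5) and to derive~(3) from~(6). Two elementary facts will do the bookkeeping: for every $x\in P$ one has $U(x,x^+)=\{1\}$ and $L(x,x^+)=\{0\}$, because $U(x,b)=\{1\}$ and $L(x,b)=\{0\}$ for each complement $b$ of $x$, while $1\in U(x,x^+)$ and $0\in L(x,x^+)$.

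For~(5) I would substitute $A:=U(x^+,y)$, $C:=x^+$ and take $x$ for the distinguished element. The side condition $C\le A$ holds since every element of $U(x^+,y)$ lies above every element of $x^+$, and both sets are non-empty as $\mathbf P$ is complemented. Then the left-hand side of~(5) is $U\big(L(U(x^+,y),x),x^+\big)=U\big(x^+,L(x,U(x^+,y))\big)$, the left-hand side of~(3); and on the right-hand side the collapse $U(x,x^+)=\{1\}$ gives $L\big(U(x^+,y),U(x,x^+)\big)=LU(x^+,y)$, so that side equals $ULU(x^+,y)=U(x^+,y)\subseteq U(y)$. Hence~(3) holds and Theorem~\ref{th4}~(i) finishes this part.

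For~(6) the order-dual substitution $A:=L(x,y)$, $B:=x^+$, $z:=x$ is available (its side condition $A\le z$ holds because $L(x,y)\subseteq L(x)$): the left-hand side rewrites as $L\big(U(L(x,y),x^+),x\big)=L\big(x,U(x^+,L(x,y))\big)$ and, using $L(x,x^+)=\{0\}$, the right-hand side collapses to $LUL(x,y)=L(x,y)\subseteq L(y)$. The main obstacle lives here: this computation produces condition~(4), which through Theorem~\ref{th4}~(ii) delivers only the reverse implication. To reach the implication as stated I would instead aim directly at~(3) from~(6) --- either by locating a different choice of $A,B,z$ whose right-hand side collapses against $U(x,x^+)=\{1\}$ rather than against $L(x,x^+)=\{0\}$, or by first showing that the lower set-modular law~(6) entails its upper companion~(5) (whereupon the previous paragraph applies verbatim, using the Galois identities $LUL=L$ and $ULU=U$). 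Pinning down this passage to~(3), rather than settling for~(4), is the step I expect to require the most care.
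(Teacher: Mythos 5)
Your two substitutions are exactly the paper's own proof: the authors likewise instantiate (5) with $A=U(x^+,y)$, $C=x^+$ and distinguished element $x$ to get
\[
U\Big(x^+,L\big(x,U(x^+,y)\big)\Big)=UL\big(U(x^+,y),U(x,x^+)\big)=ULU(x^+,y)=U(x^+,y)\subseteq U(y),
\]
i.e.\ condition (3), and instantiate (6) with $A=L(x,y)$, $B=x^+$, $z=x$ to get condition (4), then invoke Theorem~\ref{th4}. The ``obstacle'' you flag at the end is not a gap in your argument but a typographical error in the statement of the corollary: its second conclusion should read ``$x\le y\hookrightarrow z$ implies $x\odot y\le z$'' (the converse implication, parallel to the structure of Theorems~\ref{th3} and~\ref{th4}), and that is precisely what your derivation of (4) combined with the second half of Theorem~\ref{th4} delivers. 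No passage from (6) to (3) is intended or, most likely, possible --- (5) and (6) are order-dual hypotheses feeding the two dual halves of Theorem~\ref{th4} --- so rather than searching for an alternative substitution you should simply conclude the second part with the corrected implication, exactly as the paper does.
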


\begin{proof}
We have that (5) implies
\begin{align*}
U\Big(x^+,L\big(x,U(x^+,y)\big)\Big) & =U\Big(L\big(U(x^+,y),x\big),x^+\Big)=UL\big(U(x^+,y),U(x,x^+)\big)= \\
                                     & =ULU(x^+,y)=U(x^+,y)\subseteq U(y)
\end{align*}
for all $x,y\in P$, i.e.\ (3). Moreover, (6) implies
\begin{align*}
L\Big(x,U\big(x^+,L(x,y)\big)\Big) & =L\Big(U\big(L(x,y),x^+\big),x\Big)=LU\big(L(x,y),L(x^+,x)\big)=LUL(x,y)= \\
                                   & =L(x,y)\subseteq L(y)	
\end{align*}
for all $x,y\in P$, i.e.\ (4). Now	Corollary~\ref{cor1} follows from Theorem~\ref{th4}.
\end{proof}

\section{Orthogonality, Dedekind-MacNeille completion and convex subsets}

In this section we investigate the binary relation $\perp$ of orthogonality introduced in Section~2 within the Dedekind-MacNeille completion (see e.g.\ \cite B) of a poset and within the poset of non-empty convex subsets of a bounded poset.

Let $\mathbf P=(P,\le)$ be an arbitrary poset. Put
\[
D(\mathbf P):=\{L(A)\mid A\subseteq P\}=\{B\subseteq P\mid LU(B)=B\}.
\]
Then $\mathbf D(\mathbf P):=(D(\mathbf P),\subseteq)$ is a complete lattice, called the {\em Dedekind-MacNeille completion} of $\mathbf P$. It is well-known that in $\mathbf D(\mathbf P)$
\begin{align*}
  \bigvee_{i\in I}A_i & =LU\left(\bigcup_{i\in I}A_i\right), \\
\bigwedge_{i\in I}A_i & =\bigcap_{i\in I}A_i.	
\end{align*}
Moreover, it is well-known that if $a,a_i\in P$ for all $i\in I$ then
\begin{align*}
  L(a)=\bigvee_{i\in I}L(a_i) & \text{ if and only if }a=\bigvee_{i\in I}a_i, \\	
L(a)=\bigwedge_{i\in I}L(a_i) & \text{ if and only if }a=\bigwedge_{i\in I}a_i.
\end{align*}
Further, the mapping $x\mapsto L(x)$ is an embedding of $\mathbf P$ into $\mathbf D(\mathbf P)$, i.e.\ for all $x,y\in P$, $x\le y$ if and only if $L(x)\subseteq L(y)$. If $\mathbf P$ has a smallest element $0$ and a greatest element $1$ then $L(0)$ and $L(1)$ are the smallest, respectively greatest element of $\mathbf D(\mathbf P)$. Hence, if $\mathbf P$ is bounded and $a,b\in P$ then $a\perp_\mathbf Pb$ if and only if $L(a)\perp_{\mathbf D(\mathbf P)}L(b)$.

We can characterize the orthogonality of two elements of the Dedekind-MacNeille completion of a bounded poset as follows.

\begin{theorem}
Let $\mathbf P=(P,\le,0,1)$ be a bounded poset and $A,B\in D(\mathbf P)$. Then the following are equivalent:
\begin{enumerate}[{\rm(i)}]
\item $A\perp_{\mathbf D(\mathbf P)}B$,
\item $\bigvee_\mathbf P(A\cup B)=1$ and $x\wedge y=0$ for all $x\in A$ and all $y\in B$.
\end{enumerate}		
\end{theorem}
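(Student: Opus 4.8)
The plan is to unfold the orthogonality relation in the complete lattice $\mathbf D(\mathbf P)$ into its join and meet components and to translate each separately into the corresponding clause of (ii). Recall from the discussion preceding the theorem that the least and greatest elements of $\mathbf D(\mathbf P)$ are $L(0)=\{0\}$ and $L(1)=P$, and that the lattice operations are $A\vee B=LU(A\cup B)$ and $A\wedge B=A\cap B$. Hence $A\perp_{\mathbf D(\mathbf P)}B$ asserts precisely that the join is the top and the meet is the bottom, i.e.\ $LU(A\cup B)=P$ together with $A\cap B=\{0\}$. I would therefore reduce the theorem to two independent equivalences: that $A\cap B=\{0\}$ is equivalent to $x\wedge y=0$ for all $x\in A$ and all $y\in B$, and that $LU(A\cup B)=P$ is equivalent to $\bigvee_\mathbf P(A\cup B)=1$.

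For the meet condition I would note first that, since $0$ is the least element, $x\wedge y=0$ holds if and only if $L(x,y)=\{0\}$; thus the meet clause of (ii) reads $L(x,y)=\{0\}$ for all $x\in A$, $y\in B$. Being of the form $L(\cdot)$, both $A$ and $B$ are down-sets, and each contains $0$, so $\{0\}\subseteq A\cap B$ holds automatically and only the reverse inclusion is at stake. Assuming $A\cap B=\{0\}$, fix $x\in A$ and $y\in B$; as $A$ is a down-set containing $x$ we have $L(x)\subseteq A$, and likewise $L(y)\subseteq B$, whence $L(x,y)=L(x)\cap L(y)\subseteq A\cap B=\{0\}$, and since $0\in L(x,y)$ this gives $L(x,y)=\{0\}$. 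Conversely, assuming $L(x,y)=\{0\}$ for all such $x,y$, take any $z\in A\cap B$ and specialise to $x=z\in A$ and $y=z\in B$, obtaining $L(z)=L(z,z)=\{0\}$; since $z\in L(z)$ this forces $z=0$, so $A\cap B=\{0\}$.

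For the join condition I would argue that $LU(A\cup B)=P=L(1)$ is equivalent to $U(A\cup B)=\{1\}$: applying $U$ and using the Galois identity $ULU=U$ turns the former into $U(A\cup B)=UL(1)=U(P)=\{1\}$, while the converse is immediate since $U(A\cup B)=\{1\}$ gives $LU(A\cup B)=L(1)=P$. It then remains to identify $U(A\cup B)=\{1\}$ with $\bigvee_\mathbf P(A\cup B)=1$, and this is the one point requiring care, since the equation $\bigvee_\mathbf P(A\cup B)=1$ simultaneously asserts that the supremum exists in $\mathbf P$ and equals $1$. Because $1$ is already an upper bound of every subset, the supremum equals $1$ exactly when $1$ is the \emph{least} upper bound, which — $1$ being the top — happens precisely when no proper upper bound exists, i.e.\ when $U(A\cup B)=\{1\}$. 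Combining the two equivalences yields the statement. I expect this join translation to be the main obstacle, as one must reconcile the existence-assertion implicit in $\bigvee_\mathbf P(A\cup B)=1$ with the purely set-theoretic condition $U(A\cup B)=\{1\}$; by comparison the meet direction is routine.
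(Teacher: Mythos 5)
Your proof is correct and follows essentially the same route as the paper: both decompose $A\perp_{\mathbf D(\mathbf P)}B$ into the join condition, handled via $LU(A\cup B)=L(1)\Leftrightarrow U(A\cup B)=U(1)\Leftrightarrow\bigvee_\mathbf P(A\cup B)=1$, and the meet condition $A\cap B=\{0\}$, which is shown equivalent to pairwise $x\wedge y=0$ using exactly the same down-set (i.e.\ $A=LU(A)$) argument. The only difference is cosmetic: you spell out the existence issue hidden in $\bigvee_\mathbf P(A\cup B)=1$, which the paper passes over silently.
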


\begin{proof}
We have	$A\perp_{\mathbf D(\mathbf P)}B$ if and only if $A\vee_{\mathbf D(\mathbf P)}B=L(1)$ and $A\wedge_{\mathbf D(\mathbf P)}B=L(0)$. Now the following are equivalent: $A\vee_{\mathbf D(\mathbf P)}B=L(1)$, $LU(A,B)=L(1)$, $U(A,B)=U(1)$, $\bigvee_\mathbf P(A\cup B)=1$. Moreover $A\wedge_{\mathbf D(\mathbf P)}B=L(0)$ if and only if $A\cap B=\{0\}$. Now assume $A\cap B=\{0\}$. If $a\in A$, $b\in B$, $c\in P$ and $c\le a,b$ then $a\in A=LU(A)$ and $b\in B=LU(B)$. Hence $c\in LU(A)\cap LU(B)=A\cap B=\{0\}$, i.e. $c=0$ showing $a\wedge b=0$. If, conversely $x\wedge y=0$ for all $x\in A$ and all $y\in B$ and $d\in A\cap B$ then $d=d\wedge d=0$ showing $A\cap B=\{0\}$. So we finally obtain that $A\cap B=\{0\}$ is equivalent to the fact that $x\wedge y=0$ for all $x\in A$ and all $y\in B$.
\end{proof}

Let $\mathbf P=(P,\le)$ be a poset, $a,b,c\in P$ and $A,B\subseteq P$. Recall that $A$ is called {\em convex} if $x,z\in A$, $y\in P$ and $x\le y\le z$ together imply $z\in A$. Let $\Conv\mathbf P$ denote the set of all convex subsets of $\mathbf P$. We define a binary relation $\sqsubseteq$ on $2^P$ by
\[
A\sqsubseteq B\text{ if and only if }A\le_1B\text{ and }A\le_2B.
\]
Since the intersection of convex subsets of $\mathbf P$ is again a convex subset of $\mathbf P$, there exists a smallest (with respect to inclusion) convex subset $\overline A$ of $\mathbf P$ including $A$, the so-called {\em convex hull} of $A$.

\begin{lemma}
Let $(P,\le)$ be a poset and $A\subseteq P$. Then
\[
\overline A=\{x\in P\mid\text{there exist }y,z\in A\text{ with }y\le x\le z\}.
\]
\end{lemma}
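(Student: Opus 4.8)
The plan is to prove the set equality by double inclusion. Write $C := \{x\in P\mid\text{there exist }y,z\in A\text{ with }y\le x\le z\}$ for the set on the right-hand side. To establish $\overline A=C$ I would first show $A\subseteq C$, then that $C$ is convex, and finally that $C$ is the smallest convex set containing $A$; since $\overline A$ is by definition the smallest such set, the two will coincide.

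First I would verify $A\subseteq C$: for any $a\in A$ we may take $y=z=a$, so that $a\le a\le a$ witnesses $a\in C$. Next I would check that $C$ is convex. Suppose $x_1,x_2\in C$ and $w\in P$ with $x_1\le w\le x_2$; I need $w\in C$. By definition of $C$ there are $y_1,z_1\in A$ with $y_1\le x_1\le z_1$ and $y_2,z_2\in A$ with $y_2\le x_2\le z_2$. Then $y_1\le x_1\le w$ and $w\le x_2\le z_2$, so $y_1\le w\le z_2$ with $y_1,z_2\in A$, witnessing $w\in C$. This already gives $\overline A\subseteq C$, because $\overline A$ is the intersection of all convex sets containing $A$, and $C$ is one such set.

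For the reverse inclusion $C\subseteq\overline A$, let $x\in C$, so there are $y,z\in A$ with $y\le x\le z$. Since $A\subseteq\overline A$ we have $y,z\in\overline A$, and since $\overline A$ is convex, $y\le x\le z$ forces $x\in\overline A$. This completes the argument. I do not anticipate a genuine obstacle here: the statement is essentially an unwinding of the definition of convexity together with the characterization of $\overline A$ as the smallest convex superset of $A$, and the only point requiring a moment's care is the transitivity bookkeeping in the convexity check for $C$ (chaining $y_1\le w$ from $y_1\le x_1\le w$ and $w\le z_2$ from $w\le x_2\le z_2$), which is routine.
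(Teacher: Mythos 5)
Your proof is correct and follows essentially the same route as the paper: show the right-hand set is convex and contains $A$, then show it lies inside the convex hull (the paper does this for an arbitrary convex superset $B$ of $A$, you specialize to $\overline A$ itself, which is equivalent). The only cosmetic difference is that you make the trivial inclusion $A\subseteq C$ explicit, which the paper leaves implicit.
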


\begin{proof}
Let $a,c\in\overline A$ and $b\in P$ and assume $a\le b\le c$. Then there exist $d,e\in A$ with $d\le a$ and $c\le e$. Now $d\le b\le e$ which shows $b\in\overline A$. We have proved $\overline A\in\Conv\mathbf P$. Now let $B\in\Conv\mathbf P$ with $A\subseteq B$ and let $f\in\overline A$. Then there exist $g,h\in A$ with $g\le f\le h$. Since $g,h\in B\in\Conv\mathbf P$ we conclude $f\in B$. This shows $\overline A\subseteq B$ completing the proof of the lemma.
\end{proof}

Observe that $\overline A=A^\downarrow\cap A^\uparrow$. Moreover, let $\Convs\mathbf P$ denote the set of all non-empty elements of $\Conv\mathbf P$.

\begin{lemma}\label{lem2}
Let $\mathbf P=(P,\le)$ be a poset. Then the following holds:
\begin{enumerate}[{\rm(i)}]
\item $\BConv\mathbf P:=(\Conv\mathbf P,\sqsubseteq)$ is a poset,
\item if $\mathbf P$ has a smallest element $0$ and a greatest element $1$ then
\[
\BConvs\mathbf P:=(\Convs\mathbf P,\sqsubseteq,\{0\},\{1\})
\]
is a bounded poset.
\end{enumerate}
\end{lemma}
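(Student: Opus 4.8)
The plan is to verify the three poset axioms for $\sqsubseteq$ on $\Conv\mathbf P$ in part (i), and then to exhibit $\{0\}$ and $\{1\}$ as bottom and top in part (ii). Reflexivity and transitivity come essentially for free: the paper already records that $\le_1$ and $\le_2$ are quasiorders on $2^P$, and $\sqsubseteq$ is by definition their intersection, hence again reflexive and transitive; restricting to $\Conv\mathbf P$ changes nothing here.

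The only real content is antisymmetry, and this is where I would invoke the closure-operator reformulations from Section~2 together with the description of the convex hull. Suppose $A,B\in\Conv\mathbf P$ satisfy $A\sqsubseteq B$ and $B\sqsubseteq A$, so that $A\le_1B$, $B\le_1A$, $A\le_2B$ and $B\le_2A$ all hold. Applying the equivalence $A\le_1B\Leftrightarrow A^\downarrow\subseteq B^\downarrow$ to both $A\le_1B$ and $B\le_1A$ yields $A^\downarrow=B^\downarrow$, while applying $A\le_2B\Leftrightarrow B^\uparrow\subseteq A^\uparrow$ to both $A\le_2B$ and $B\le_2A$ yields $A^\uparrow=B^\uparrow$. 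Since $\overline A=A^\downarrow\cap A^\uparrow$, these two equalities give $\overline A=\overline B$; and because $A$ and $B$ are convex we have $\overline A=A$ and $\overline B=B$, whence $A=B$. This is the step I expect to be the crux, but it collapses to bookkeeping once the problem is rephrased through $^\downarrow$, $^\uparrow$ and $\overline{(\cdot)}$.

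For part (ii), first note that $\{0\}$ and $\{1\}$ are non-empty and vacuously convex, so they lie in $\Convs\mathbf P$. It then remains to check $\{0\}\sqsubseteq A$ and $A\sqsubseteq\{1\}$ for every $A\in\Convs\mathbf P$, i.e.\ four clauses of the forms $\le_1$ and $\le_2$. Each is immediate from boundedness of $\mathbf P$: for $\{0\}\le_1A$ and $\{0\}\le_2A$ one uses $0\le y$ for all $y\in P$ together with $A\ne\emptyset$ to supply the required witness, and dually $A\le_1\{1\}$ and $A\le_2\{1\}$ follow from $x\le 1$ for all $x\in P$ and $A\ne\emptyset$. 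Here the non-emptiness hypothesis is exactly what guarantees the existential witnesses demanded by $\le_1$ and $\le_2$, which is why the bounded statement is formulated over $\Convs\mathbf P$ rather than all of $\Conv\mathbf P$.
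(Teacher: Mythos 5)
Your proof is correct, and its skeleton matches the paper's: reflexivity and transitivity of $\sqsubseteq$ are inherited from the quasiorders $\le_1$ and $\le_2$, antisymmetry is where convexity enters, and (ii) reduces to checking $\{0\}\sqsubseteq C\sqsubseteq\{1\}$ for all $C\in\Convs\mathbf P$. The one place you diverge is the antisymmetry step. The paper argues element-wise: given $a\in A$, the relations $B\le_2A$ and $A\le_1B$ produce $b,c\in B$ with $b\le a\le c$, and convexity of $B$ yields $a\in B$, hence $A\subseteq B$ and, by symmetry, $A=B$. You instead combine the recorded equivalences $A\le_1B\Leftrightarrow A^\downarrow\subseteq B^\downarrow$ and $A\le_2B\Leftrightarrow B^\uparrow\subseteq A^\uparrow$ with the observation $\overline A=A^\downarrow\cap A^\uparrow$ to conclude $\overline A=\overline B$, and then use $\overline A=A$ and $\overline B=B$ for convex sets. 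This is a clean repackaging rather than a genuinely new idea --- unwinding $\overline A=A^\downarrow\cap A^\uparrow$ together with $\overline B=B$ reproduces exactly the paper's element chase --- but it has the merit of making visible that antisymmetry of $\sqsubseteq$ on $\Conv\mathbf P$ amounts to the injectivity of the assignment $A\mapsto(A^\downarrow,A^\uparrow)$ on convex sets. In (ii) you are more explicit than the paper, which only asserts the bounds are easy to see, and your diagnosis of where non-emptiness enters is essentially right, up to one small overstatement: only $\{0\}\le_1C$ and $C\le_2\{1\}$ need a witness drawn from $C\ne\emptyset$, while the clauses $\{0\}\le_2C$ and $C\le_1\{1\}$ hold vacuously or with the witnesses $0$ and $1$ even for $C=\emptyset$.
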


\begin{proof}
\
\begin{enumerate}[(i)]
\item Since $\le_1$ and $\le_2$ are quasiorders, the same is true for $\sqsubseteq$. Let $A,B\in\Conv\mathbf P$, assume $A\sqsubseteq B$ and $B\sqsubseteq A$ and let $a\in A$. Because of $B\le_2A$ there exists some $b\in B$ with $b\le a$, and because of $A\le_1B$ there exists some $c\in B$ with $a\le c$. Since $b\le a\le c$ and $b,c\in B\in\Conv\mathbf P$ we conclude $a\in B$. This shows $A\subseteq B$. From symmetry reasons we have also $B\subseteq A$, i.e.\ $A=B$.
\item According to (i), $(\Convs\mathbf P,\sqsubseteq)$ is a poset. It is easy to see that $\{0\}\sqsubseteq C\sqsubseteq\{1\}$ for all $C\in\Convs\mathbf P$.
\end{enumerate}
\end{proof}

\begin{example}
For the bounded poset $\mathbf P$ depicted in Fig.~9

\vspace*{-2mm}

\begin{center}
\setlength{\unitlength}{7mm}
\begin{picture}(6,6)
\put(3,1){\circle*{.3}}
\put(1,3){\circle*{.3}}
\put(5,3){\circle*{.3}}
\put(3,5){\circle*{.3}}
\put(3,1){\line(-1,1)2}
\put(3,1){\line(1,1)2}
\put(3,5){\line(-1,-1)2}
\put(3,5){\line(1,-1)2}
\put(2.85,.3){$0$}
\put(.35,2.85){$a$}
\put(5.4,2.85){$b$}
\put(2.85,5.4){$1$}
\put(-.5,-.75){{\rm Figure~9. Bounded poset $\mathbf P$}}
\end{picture}
\end{center}

\vspace*{5mm}

the bounded poset $\BConvs\mathbf P$ is visualized in Fig.~10

\vspace*{-2mm}

\begin{center}
\setlength{\unitlength}{7mm}
\begin{picture}(14,14)
\put(7,1){\circle*{.3}}
\put(5,3){\circle*{.3}}
\put(9,3){\circle*{.3}}
\put(7,5){\circle*{.3}}
\put(1,7){\circle*{.3}}
\put(5,7){\circle*{.3}}
\put(9,7){\circle*{.3}}
\put(13,7){\circle*{.3}}
\put(7,9){\circle*{.3}}
\put(5,11){\circle*{.3}}
\put(9,11){\circle*{.3}}
\put(7,13){\circle*{.3}}
\put(7,1){\line(-1,1)6}
\put(7,1){\line(1,1)6}
\put(7,13){\line(-1,-1)6}
\put(7,13){\line(1,-1)6}
\put(5,7){\line(1,-1)4}
\put(5,7){\line(1,1)4}
\put(9,7){\line(-1,-1)4}
\put(9,7){\line(-1,1)4}
\put(6.85,.3){$0$}
\put(4.05,2.85){$0a$}
\put(4.05,6.85){$ab$}
\put(4.05,10.85){$a1$}
\put(.35,6.85){$a$}
\put(9.4,2.85){$0b$}
\put(9.4,6.85){$0ab1$}
\put(9.4,10.85){$b1$}
\put(13.4,6.85){$b$}
\put(7.4,4.85){$0ab$}
\put(7.4,8.85){$ab1$}
\put(6.85,13.4){$1$}
\put(2,-.75){{\rm Figure~10. Poset of convex subsets of $\mathbf P$}}
\end{picture}
\end{center}

\vspace*{4mm}

Here we write $0ab1$ instead of $\{0,a,b,1\}$, and so on.
\end{example}

Let $\mathbf P=(P,\le,0,1)$ be a bounded poset and $A,B\subseteq P$. Then we define
\[
A\perp_\mathbf PB\text{ if and only if }x\perp_\mathbf Py\text{ for all }x\in A\text{ and all }y\in B.
\]

Finally, we show that two non-empty subsets of a bounded poset $(P,\le,0,1)$ are orthogonal if and only if their convex hulls are orthogonal in the bounded poset of non-empty convex subsets of $P$.

\begin{theorem}
Let $\mathbf P=(P,\le,0,1)$ be a bounded poset and $A,B$ non-empty subsets of $P$. Then $A\perp_\mathbf PB$ if and only if $\overline A\perp_{\BConvs\mathbf P}\overline B$.
\end{theorem}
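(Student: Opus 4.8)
The statement unwinds the orthogonality relation $\perp_{\BConvs\mathbf P}$ on convex hulls into pointwise orthogonality of the original sets. Recall that $\overline A\perp_{\BConvs\mathbf P}\overline B$ means, by the definition of orthogonality in a bounded poset, that $\overline A$ and $\overline B$ are complements of each other in $\BConvs\mathbf P=(\Convs\mathbf P,\sqsubseteq,\{0\},\{1\})$. Since the order in that poset is $\sqsubseteq$ and the bounds are $\{0\}$ and $\{1\}$, this amounts to two join/meet conditions expressed via the quasiorders $\le_1$ and $\le_2$. So the first thing I would do is translate $\overline A\perp_{\BConvs\mathbf P}\overline B$ into explicit conditions on elements of $\overline A$ and $\overline B$ using the upset/downset characterizations $A\le_1B\Leftrightarrow A\subseteq B^\downarrow$ and $A\le_2B\Leftrightarrow B\subseteq A^\uparrow$ from Section~2.

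The second step is to reduce everything from convex hulls back to $A$ and $B$ themselves. The key observation is that orthogonality is convexity-insensitive: if $y\le x\le z$ and both $y$ and $z$ are orthogonal to a fixed element $w$ (so $U(x',w)$-type conditions hold at both ends), then $x$ is orthogonal to $w$ as well. This is exactly the content of Proposition~\ref{prop1}(iii), which says each $a^+$ is convex; applied symmetrically it gives that $A\perp_\mathbf PB$ holds if and only if $\overline A\perp_\mathbf P\overline B$ holds. Concretely, using $\overline A=A^\downarrow\cap A^\uparrow$, every element of $\overline A$ is sandwiched between two elements of $A$, and orthogonality to a fixed $b\in B$ is preserved under such sandwiching. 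I would isolate this as the central lemma of the argument: for non-empty $A,B$, the pointwise orthogonality $A\perp_\mathbf PB$ is equivalent to $\overline A\perp_\mathbf P\overline B$.

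The third step is to verify that $\overline A\perp_\mathbf P\overline B$ (pointwise orthogonality of the hulls) coincides with $\overline A\perp_{\BConvs\mathbf P}\overline B$ (orthogonality of the hulls as elements of the convex-subset poset). Here I would compute the join and meet of $\overline A,\overline B$ in $\BConvs\mathbf P$. The requirement $\overline A\vee\overline B=\{1\}$ forces, via $\le_1$ and $\le_2$, that every element of $\overline A\cup\overline B$ lies below some common upper bound collapsing to $1$, while $\overline A\wedge\overline B=\{0\}$ forces the dual. Translating back, the meet condition yields $x\wedge y=0$ and the join condition yields $x\vee y=1$ for all $x\in\overline A$, $y\in\overline B$, which is precisely pointwise orthogonality. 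Chaining the three steps gives the equivalence.

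The main obstacle I anticipate is the third step: the poset $\BConvs\mathbf P$ is ordered by the quasiorder-derived relation $\sqsubseteq$, not by inclusion, so its joins and meets need not be unions or intersections, and I must be careful that $\{0\}$ and $\{1\}$ are genuinely the bounds (guaranteed by Lemma~\ref{lem2}(ii)) and that the complement conditions in $\sqsubseteq$ unwind cleanly to the two-sided pointwise conditions. I would handle this by working directly with $\le_1$ and $\le_2$ rather than trying to identify suprema as sets, using that $C\sqsubseteq\{1\}$ and $\{0\}\sqsubseteq C$ hold automatically so that the orthogonality conditions reduce to showing $\overline A\cup\overline B$ has only $1$ as a common upper-bound-target and only $0$ as common lower, which is where the pointwise $x\vee y=1$ and $x\wedge y=0$ enter.
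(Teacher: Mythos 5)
Your plan is correct and takes essentially the same route as the paper's proof: both rest on the sandwich characterization of the convex hull (every $x\in\overline A$ lies between two elements of $A$, so orthogonality to a fixed element passes from $A$ to $\overline A$), on unwinding $\sqsubseteq$ through $\le_1$ and $\le_2$, and, for the converse direction, on testing the join/meet conditions in $\BConvs\mathbf P$ against the principal sets $\uparrow n$ and $\downarrow m$ formed from common upper and lower bounds of elements of the two sets. Your factoring through the intermediate claim ``$A\perp_\mathbf PB$ iff $\overline A\perp_\mathbf P\overline B$ pointwise'' is only a repackaging of what the paper inlines (its forward direction descends from an element of an arbitrary upper bound $C$ through $\overline A,\overline B$ down to elements of $A,B$ in a single chain), and the one construction your sketch leaves implicit --- that the right test objects for extracting $x\vee y=1$ and $x\wedge y=0$ from the poset-orthogonality of the hulls are precisely $\uparrow n$ and $\downarrow m$ --- is exactly how your stated intention of ``working directly with $\le_1$ and $\le_2$'' gets carried out.
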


\begin{proof}
First assume $A\perp_\mathbf PB$. Let $C\in\Convs\mathbf P$ with $\overline A,\overline B\sqsubseteq C$. Assume $a\in C$. Then there exist $b\in\overline A$ and $c\in\overline B$ with $b,c\le a$. Hence there exist $d\in A$ and $e\in B$ with $d\le b$ and $e\le c$. Since $d,e\le a$ and $d\vee e=1$ we conclude $a=1$. Hence $\overline A\vee_{\Convs\mathbf P}\overline B=\{1\}$. Now let $D\in\Convs\mathbf P$ with $D\sqsubseteq\overline A,\overline B$. Assume $f\in D$. Then there exist $g\in\overline A$ and $h\in\overline B$ with $f\le g,h$. Hence there exist $i\in A$ and $j\in B$ with $g\le i$ and $h\le j$. Since $f\le i,j$ and $i\wedge j=0$ we conclude $f=0$. Hence $\overline A\wedge_{\Convs\mathbf P}\overline B=\{0\}$. This shows $\overline A\perp_{\BConvs\mathbf P}\overline B$. Conversely, assume $\overline A\perp_{\BConvs\mathbf P}\overline B$. Let $k\in A$, $l\in B$ and $m,n\in P$ with $m\le k,l\le n$. Then $\downarrow m\sqsubseteq\overline A,\overline B\sqsubseteq\uparrow n$ and hence $\downarrow m=\{0\}$ and $\uparrow n=\{1\}$ whence $m=0$ and $n=1$. This shows $k\perp_\mathbf Pl$. Since $k$ and $l$ were arbitrary elements of $A$ and $B$, respectively, we get $A\perp_\mathbf PB$.
\end{proof}








Authors' addresses:

Michal Botur \\
Palack\'y University Olomouc \\
Faculty of Science \\
Department of Algebra and Geometry \\
17.\ listopadu 12 \\
771 46 Olomouc \\
Czech Republic \\
michal.botur@upol.cz

Ivan Chajda \\
Palack\'y University Olomouc \\
Faculty of Science \\
Department of Algebra and Geometry \\
17.\ listopadu 12 \\
771 46 Olomouc \\
Czech Republic \\
ivan.chajda@upol.cz

Helmut L\"anger \\
TU Wien \\
Faculty of Mathematics and Geoinformation \\
Institute of Discrete Mathematics and Geometry \\
Wiedner Hauptstra\ss e 8-10 \\
1040 Vienna \\
Austria, and \\
Palack\'y University Olomouc \\
Faculty of Science \\
Department of Algebra and Geometry \\
17.\ listopadu 12 \\
771 46 Olomouc \\
Czech Republic \\
helmut.laenger@tuwien.ac.at
\end{document}